\numberwithin{equation}{section} 
\newcounter{cont}[section] 
\newtheorem{thm}[cont]{Theorem}
\newtheorem{prop}[cont]{Proposition}
\newtheorem{lem}[cont]{Lemma}
\theoremstyle{definition}
\newtheorem{defn}[cont]{Definition}
 \theoremstyle{remark}
 \newtheorem{rem}[cont]{Remark}
\newcommand{\R}{\mathbb{R}}
\newcommand{\e}{\varepsilon}
\begin{document}

\title[Layer dynamics for Allen-Cahn equation with nonlinear diffusion]{Layer dynamics for 
the Allen--Cahn equation with nonlinear phase-dependent diffusion}

\author[J. A. Butanda Mej\'ia]{Jos\'e A. Butanda Mej\'ia}
\address[J. A. Butanda Mej\'ia]{Departamento de Matem\'aticas y Mec\'anica\\Instituto de Investigaciones en Matem\'aticas Aplicadas y en 
Sistemas\\Universidad Nacional Aut\'{o}noma de M\'{e}xico\\Circuito Escolar s/n, Ciudad Universitaria, C.P. 04510\\Cd. de M\'{e}xico (Mexico)}
\email{alex.87butanda@gmail.com}

\author[D. Casta\~{n}\'{o}n Quiroz]{Daniel Casta\~{n}\'{o}n Quiroz}
\address[D. Casta\~{n}\'{o}n Quiroz]{Departamento de Matem\'aticas y Mec\'anica\\Instituto de Investigaciones en Matem\'aticas Aplicadas y en 
Sistemas\\Universidad Nacional Aut\'{o}noma de M\'{e}xico\\Circuito Escolar s/n, Ciudad Universitaria, C.P. 04510\\Cd. de M\'{e}xico (Mexico)}
\email{daniel.castanon@iimas.unam.mx}

\author[R. Folino]{Raffaele Folino}
\address[R. Folino]{Departamento de Matem\'aticas y Mec\'anica\\Instituto de Investigaciones en Matem\'aticas Aplicadas y en 
Sistemas\\Universidad Nacional Aut\'{o}noma de M\'{e}xico\\Circuito Escolar s/n, Ciudad Universitaria, C.P. 04510\\Cd. de M\'{e}xico (Mexico)}
\email{folino@aries.iimas.unam.mx}

\author[L. F. L\'{o}pez R\'{\i}os]{Luis F. L\'{o}pez R\'{\i}os}
\address[L. F. L\'{o}pez R\'{\i}os]{Departamento de Matem\'aticas y Mec\'anica\\Instituto de Investigaciones en Matem\'aticas Aplicadas y 
en Sistemas\\Universidad Nacional Aut\'{o}noma de M\'{e}xico\\Circuito Escolar s/n, Ciudad Universitaria, C.P. 04510\\Cd. de M\'{e}xico (Mexico)}
\email{luis.lopez@aries.iimas.unam.mx}
 
\keywords{nonlinear diffusion; metastability; layer dynamics}
\subjclass[2020]{35K20, 35K55, 35K57}

\maketitle

\begin{abstract} 
The goal of this paper is to describe the metastable dynamics of the solutions to the reaction-diffusion equation with nonlinear phase-dependent diffusion
$u_t=\e^2(D(u)u_x)_x-f(u)$, where $D$ is a strictly positive function and $f$ is a bistable reaction term.
We derive a system of ordinary differential equations describing the slow evolution of the \emph{metastable states}, whose existence has been proved in \cite{FHLP}.
Such a system generalizes the one derived in the pioneering work \cite{Carr-Pego} to describe the metastable dynamics for the classical Allen--Cahn equation, 
which corresponds to the particular case $D\equiv1$.
\end{abstract}

\section{Introduction}\label{sec:intro}
\textbf{Metastability} is a broad term used in the analysis of partial differential equations to describe the persistence of unsteady structures for a very long time;
\emph{metastable dynamics} appears when some generic solutions of a time-dependent equation seem
motionless while still evolving very slowly and then eventually, after a very long time, change dramatically and reach a stable configuration. 
The seminal example of PDE whose solutions exhibit metastability is the one-dimensional Allen--Cahn equation 
\begin{equation}\label{eq:cAC}
	u_t = \e^2 u_{xx} - f(u), \quad x \in (a,b), \: t > 0,
\end{equation}
introduced to model the motion of spatially non-uniform phase structures in crystalline solids, see \cite{Allen-Cahn}. 
It describes the state of a system confined in a bounded space domain in terms of a scalar phase field, $u = u(x,t)$,
depending on space and time variables, which interpolates two homogeneous pure components, 
$u = \alpha$ and $u = \beta$, of the binary alloy. 
The balanced bistable reaction term $f$ can be seen as the derivative of a potential $F\in C^3(\R)$,
which is a prescribed function of the phase field $u$, having a double-well shape with local minima at the preferred $\alpha$- and $\beta$-phases. 
The metastable dynamics of the solutions to \eqref{eq:cAC} was analyzed and described in unpublished notes of John Neu 
and the landmark works \cite{Bron-Kohn,Carr-Pego,Fusco-Hale};
since then, there are many papers investigating this very interesting phenomenon, see for instance \cite{Bet-Sme,Chen,Otto-Rez,Ward,West} and references therein.

The main goal of this paper is to describe in detail the metastable dynamics for the following Allen--Cahn equation with phase-dependent diffusion
\begin{equation}\label{eq:D-model}
	u_t=\e^2(D(u)u_x)_x-f(u), \qquad \qquad x\in  (a,b), \; t>0,
\end{equation}
endowed with homogeneous Neumann boundary conditions
\begin{equation}\label{eq:Neu}
	u_x(a,t)=u_x(b,t)=0, \quad \quad \; t>0,
\end{equation}
and initial datum
\begin{equation}\label{eq:initial}
	u(x,0)=u_0(x), \qquad \qquad x\in[a,b].
\end{equation}
In \eqref{eq:D-model}, $\e>0$ is a small parameter, the diffusion coefficient $D = D(u)$ is a strictly positive function of the phase field and 
the reaction term $f=f(u)$ is assumed to be of \emph{bistable} type. More precisely, we assume that there exists an open interval $I\subset\R$ such that $D\in C^1(I)$ satisfies 
\begin{equation}\label{eq:ass-D}
	D(u)\geq d>0, \qquad \qquad \forall\, u\in I,
\end{equation}
and $f\in C^2(I)$ is such that
\begin{equation}\label{eq:ass-f}
	f(\alpha)=f(\beta)=0, \qquad \qquad f'(\alpha)>0, \quad f'(\beta)>0,
\end{equation}
for some $\alpha<\beta$ with $[\alpha,\beta]\subset I$. 
Clearly, if we choose $D(u)\equiv 1$, then we recover the classical Allen--Cahn equation \eqref{eq:cAC}.
There are several physical situations where the mathematical modelling involves (strictly positive) phase-dependent diffusivity coefficient,
such as the experimental exponential diffusion function for metal alloys \cite{Wagner} and the Mullins diffusion model for thermal grooving \cite{Broa,Mullins}.
The first example pertains to the description of the physical properties of metal alloys, 
for which it is well-known that the characteristic lattice parameter varies with the metal composition \cite{HaCr49,Wagner}. 
This function can be computed from experimental results and in \cite{Wagner} the author claims that the function that best fits many experiments on binary alloys 
has the shape of an exponential function
\begin{equation}
	D = D(u) = D_0 \exp \left\{c_0\left(u - \tfrac{1}{2}(\alpha + \beta)\right)\right\},
\end{equation}
where $c_0>0$ and $D_0>0$ are experimental constants.
The second example refers to thermal grooving, that is, the development of surface groove profiles on a heated polycrystal 
by the mechanism of evaporation-condensation (cf. Mullins \cite{Mullins}, Broadbridge \cite{Broa}). 
The metal polycrystal is assumed to be in quasi-equilibrium of its vapor, and the surface diffusion process, 
as well as the mechanism of evaporation and condensation, are modelled via Gibbs--Thompson formula. 
Since the properties of the interface do not depend on its orientation, this is essentially a one-dimensional phenomenon. 
After an appropriate transformation (see \cite{Broa}), the Mullins nonlinear diffusion model of groove development 
can be expressed in terms of a one-dimensional diffusion equation where the nonlinear diffusion coefficient is given by
\begin{equation}\label{eq:MullinsD}
	D(u) = \frac{D_0}{1+ u^2},
\end{equation}
with $D_0$ a positive constant.

The crucial assumptions in this work concern the interaction between $f$ and $D$ and we suppose that
\begin{equation}\label{eq:ass-int0}
	\int_\alpha^\beta f(s)D(s)\,ds=0, 
\end{equation}	
and
\begin{equation}\label{eq:ass-int1}
	\int_\alpha^u f(s)D(s)\,ds>0, \qquad \forall\,u\neq\alpha,\beta.
\end{equation}
It is important to notice that \eqref{eq:ass-D}-\eqref{eq:ass-f}-\eqref{eq:ass-int0}-\eqref{eq:ass-int1} imply that the function
\begin{equation}\label{eq:G}
	G(u):=\int_{\alpha}^{u}f(s)D(s)\, ds
\end{equation}
is a double-well potential with wells of equal depth, i.e. $G:I\to\R$ satisfies 
\begin{equation}\label{eq:ass-G}
	\begin{aligned}
		G(\alpha)=G(\beta)=G'(\alpha)&=G'(\beta)=0, \qquad G''(\alpha)>0,\quad G''(\beta)>0, \\ 
		&G(u)>0, \quad \forall\, u\neq\alpha,\beta.
	\end{aligned}
\end{equation}
\begin{rem}
Equation \eqref{eq:D-model} is a parabolic equation in $I$ in view of the assumption \eqref{eq:ass-D} and 
throughout the paper we consider initial data satisfying
\begin{equation}\label{eq:u0continI}
	M_1\leq u_0(x)\leq M_2, \qquad \qquad \forall\,x\in[a,b],
\end{equation}
for some $M_1<M_2$ such that $[M_1,M_2]\subset I$.
Therefore, we can use the classical maximum principle to conclude that the solution $u$ to the initial boundary value problem
 \eqref{eq:D-model}-\eqref{eq:initial} remains in $I$ for all time $t\geq0$.
 As we shall see, assumption \eqref{eq:ass-D} plays a crucial role in our analysis, which is not valid if we allow the diffusion coefficient $D$ to vanish at one or more points.
 To the best of our knowledge, the study of metastable dynamics for \eqref{eq:D-model} in the \emph{degenerate} setting -  when $D$ can vanish at some points -
 remains an open problem. We refer the interested reader to \cite{Nodea}, in which a related but distinct model is examined, 
 wherein either $D$ or $f'$ may vanish at $\alpha$ and/or $\beta$.
\end{rem}

The model \eqref{eq:D-model} with $D,f$ satisfying \eqref{eq:ass-D}-\eqref{eq:ass-f} and \eqref{eq:ass-int0}-\eqref{eq:ass-int1} has been studied in \cite{FHLP},
where it is proved that some solutions exhibit metastable dynamics:
if the initial datum $u_0$ has a particular \emph{transition layer structure}, then the time-dependent solution maintains the same (unstable) structure for an exponentially long time,
namely a time of $\mathcal{O}(\exp(C/\e))$, where $C>0$ is a constant, see Theorem \ref{thm:main} below.
We recall that this phenomenon is a peculiarity of the one-dimensional model and that the multidimensional version of \eqref{eq:D-model} 
($x\in\Omega\subset\mathbb R^n$, with $n>1$) has been studied in \cite{EFHPS}, where the authors describe generation and propagation of interfaces moving 
with normal velocity proportional to its mean curvature. Hence, \cite{EFHPS} extends the classical results valid for the multidimensional Allen--Cahn equation
\cite{Chen2, demot-sch} to the case of a non-degenerate ($D$ is strictly positive) nonlinear diffusion.

The motivation for this article is twofold.
On the one hand, this paper is a continuation of \cite{FHLP}:
once the existence of metastable states for \eqref{eq:D-model}-\eqref{eq:Neu} has been established, the aim is to describe their slow dynamics in detail.
On the other hand, our analysis generalizes the result contained in \cite{Carr-Pego}, concerning the layer dynamics for the classical Allen--Cahn equation \eqref{eq:cAC}.
Indeed, we will derive a system of ordinary differential equations, which describes the exponentially slow motion of the metastable states for \eqref{eq:D-model}-\eqref{eq:Neu}
for generic functions $D,f$ satisfying \eqref{eq:ass-D}-\eqref{eq:ass-f} and \eqref{eq:ass-int0}-\eqref{eq:ass-int1}.
For completeness and to maintain the self-contained nature of this paper, we recall here the main result from \cite{FHLP}.
Let us fix $N\in\mathbb{N}$ and a {\it piecewise constant function} $w$ 
assuming only the values $\alpha,\beta$ and with exactly $N$ jumps located at $h_1,\dots,h_N$, namely 
\begin{equation}\label{eq:vstruct}
	w:[a,b]\rightarrow\{\alpha,\beta\}\  \hbox{with $N$ jumps located at } a<h_1<h_2<\cdots<h_N<b.
\end{equation}	
Moreover, we fix $r>0$ such that
\begin{equation}\label{eq:r}
	h_i+r<h_{i+1}-r, \ \hbox{ for}\ i=1,\dots,N, \qquad   a\leq h_1-r,\qquad h_N+r\leq b.
\end{equation}
Finally, let us define the energy
\begin{equation*}
	E_\e[u]:=\int_a^b\left\{\frac\e2[D(u)u_x]^2+\frac{G(u)}\e\right\}\,dx,
\end{equation*}
where $G$ is the function defined in \eqref{eq:G}, and the positive constant
\begin{equation*}
	\gamma_0:=\int_{\alpha}^{\beta}\sqrt{2G(s)}D(s)\, ds,
\end{equation*}
which represents the \emph{minimum energy} to have a transition between $\alpha$ and $\beta$, see \cite[Proposition 2.4]{FHLP}.
\begin{defn}
Let $D$ and $f$ satisfying \eqref{eq:ass-D}, \eqref{eq:ass-f} and \eqref{eq:ass-int0}-\eqref{eq:ass-int1},
and let $w$ be a piecewise constant function like in \eqref{eq:vstruct}-\eqref{eq:r}.
We say that a function $u^\e_0\in H^1(a,b)$ has an \emph{$N$-transition layer structure} if 
\begin{equation}\label{eq:ass-u0}
	\lim_{\varepsilon\rightarrow 0} \|u^\varepsilon_0-w\|_{{}_{L^1}}=0,
\end{equation}
and there exist $C>0$ and $A\in(0,r\sqrt{2\lambda})$, where 
\begin{equation}\label{eq:lambda}
	\lambda:=\min\left\{\frac{f'(\alpha)}{D(\alpha)},\frac{f'(\beta)}{D(\beta)}\right\},
\end{equation}
such that
\begin{equation}\label{eq:energy-ini}
	E_\varepsilon[u^\varepsilon_0]\leq N\gamma_0+C\exp(-A/\e),
\end{equation}
for any $\varepsilon\ll1$. 
\end{defn}
The main goal of \cite{FHLP} is to show that if the initial datum has an $N$-\emph{transition layer structure}, 
then the solution to the IBVP \eqref{eq:D-model}-\eqref{eq:Neu}-\eqref{eq:initial} maintains such a structure for an exponentially long time as $\e\to0^+$.
\begin{thm}[\cite{FHLP}]\label{thm:main}
Assume that $f,D\in C^2(I)$ satisfy \eqref{eq:ass-D}-\eqref{eq:ass-f}-\eqref{eq:ass-int0}-\eqref{eq:ass-int1}.
Let $w,r$ be as in \eqref{eq:vstruct}-\eqref{eq:r} and let $A\in(0,r\sqrt{2\lambda})$, with $\lambda$ defined in \eqref{eq:lambda}.
If $u^\varepsilon$ is the solution of \eqref{eq:D-model}-\eqref{eq:Neu}-\eqref{eq:initial} 
with initial datum $u_0^{\varepsilon}$ satisfying \eqref{eq:u0continI}, \eqref{eq:ass-u0} and \eqref{eq:energy-ini}, then, 
\begin{equation}\label{eq:limit}
	\sup_{0\leq t\leq m\exp(A/\varepsilon)}\|u^\varepsilon(\cdot,t)-w\|_{{}_{L^1}}\xrightarrow[\varepsilon\rightarrow0]{}0,
\end{equation}
for any $m>0$.
\end{thm}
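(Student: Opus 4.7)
My plan is to exploit the gradient-flow structure of \eqref{eq:D-model} together with the energy upper bound \eqref{eq:energy-ini} and a matching exponential lower bound for $E_\e$ on configurations close to $w$; a Cauchy--Schwarz argument applied to the dissipation will then yield \eqref{eq:limit}.

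First, differentiating $E_\e[u^\e(\cdot,t)]$ along solutions of \eqref{eq:D-model}--\eqref{eq:Neu} and integrating by parts (the Neumann condition ensures the vanishing of the boundary terms) gives the gradient-flow identity
\begin{equation*}
\frac{d}{dt}E_\e[u^\e(\cdot,t)]=-\frac{1}{\e}\int_a^b D(u^\e)(u^\e_t)^2\,dx\le 0,
\end{equation*}
so \eqref{eq:energy-ini} implies $E_\e[u^\e(\cdot,t)]\le N\gamma_0+C\exp(-A/\e)$ for all $t\ge 0$. The essential analytic ingredient is then a matching lower bound, based on the near-minimality analysis of \cite{FHLP}: whenever $u\in H^1(a,b)$ is sufficiently $L^1$-close to $w$, one has
\begin{equation*}
E_\e[u]\ge N\gamma_0-C_1\exp(-\tilde A/\e),\qquad\text{for some }\tilde A>A.
\end{equation*}
Heuristically, each of the $N$ jumps of $w$ forces $E_\e[u]$ to exceed $\gamma_0$ up to an error whose decay rate is governed by $\sqrt{2\lambda}$---the decay rate of the optimal standing-wave profile---through the buffer regions of length at least $r$ prescribed by \eqref{eq:r}; this is exactly why the admissible range for $A$ has been chosen to be $(0,r\sqrt{2\lambda})$.

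Combining these two bounds, on any time interval $[0,T]$ on which the proximity hypothesis of the lower bound holds, integration of the dissipation identity gives
\begin{equation*}
\int_0^T\!\!\int_a^b D(u^\e)(u^\e_t)^2\,dx\,dt=\e\bigl(E_\e[u_0^\e]-E_\e[u^\e(\cdot,T)]\bigr)\le C_2\,\e\exp(-A/\e).
\end{equation*}
Using \eqref{eq:ass-D} together with two applications of the Cauchy--Schwarz inequality, first in $x$ and then in $t$, I then obtain
\begin{equation*}
\|u^\e(\cdot,T)-u_0^\e\|_{L^1}\le\int_0^T\!\!\int_a^b|u^\e_t|\,dx\,dt\le\sqrt{\tfrac{(b-a)T}{d}}\bigl(C_2\,\e\exp(-A/\e)\bigr)^{1/2}\le C_3\sqrt{m\e},
\end{equation*}
uniformly for $T\in[0,m\exp(A/\e)]$. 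Combined with $\|u_0^\e-w\|_{L^1}\to 0$ from \eqref{eq:ass-u0}, the triangle inequality yields \eqref{eq:limit}; a standard continuity argument then closes the bootstrap, showing that the a~priori proximity assumption required in the lower bound indeed propagates over the whole interval $[0,m\exp(A/\e)]$ once $\e$ is small enough (depending on $m$).

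I expect the main obstacle to be the rigorous derivation of the exponential lower bound, in particular the identification of an exponent $\tilde A$ strictly exceeding $A$: this is possible thanks to the constraint $A<r\sqrt{2\lambda}$, but it requires a delicate near-minimality analysis for $E_\e$ over configurations with $N$ transitions separated by distance at least $r$, carefully tracking the exponentially small coupling between neighbouring layers and between layers and the endpoints of $[a,b]$.
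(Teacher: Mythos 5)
Your proposal is correct in outline and follows essentially the same strategy as the source: this theorem is not proved in the present paper (it is imported verbatim from \cite{FHLP}), and the argument there is exactly the Bronsard--Kohn-type scheme you describe, namely the dissipation identity $\frac{d}{dt}E_\e[u^\e]=-\e^{-1}\int_a^b D(u^\e)(u^\e_t)^2\,dx$, a variational lower bound $E_\e[u]\geq N\gamma_0-C\exp(-A/\e)$ for configurations $L^1$-close to $w$ (the same exponent $A$ suffices; strict improvement to some $\tilde A>A$ is not needed), Cauchy--Schwarz in $x$ and $t$ to convert the dissipation budget into an $L^1$ displacement bound of order $\sqrt{m\e}$, and a continuity/bootstrap argument to propagate the proximity hypothesis. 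You also correctly identify the lower-bound proposition, with the exponent range governed by $r\sqrt{2\lambda}$, as the genuinely technical ingredient carried out in \cite{FHLP}.
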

Theorem \ref{thm:main} states that the solution maintains the same structure as the initial datum (at least) for an exponentially long time; 
 in this paper, we carry out a detailed asymptotic analysis of the metastable dynamics 
 and derive a reduced system of ODEs governing the motion of the transition layers $h_1, \dots, h_N$.
 Let us briefly present our main result:
assume that the initial datum in \eqref{eq:initial} has a $N$-transition layer structure and (for definiteness) assume that $u_0(a)$ is close to $\alpha$.
Then, the evolution of the layer positions is determined by the system of ODEs
\begin{equation}\label{eq:h_j-intro}
	h_j'=\frac{\e}{S_G}(\theta^{j+1}-\theta^j), \qquad \qquad j=1,\dots, N,
\end{equation}
where $h_j:=h_j(t)$, for $j=1,\dots,N$, are the time dependent functions describing the movement of the transition points, 
the positive constant $S_G$, which depends only on the function $G$ defined in \eqref{eq:G}, is given by
$$S_G:=\int_\alpha^\beta\sqrt{2G(u)}\,du,$$
and $\theta^j$ is a function depending on the distance $\ell_j:=h_j-h_{j-1}$, 
where we defined $h_0:=2a-h_1$ and $h_{N+1}:=2b-h_N$ in view of \eqref{eq:Neu}.
In particular, $\theta^j$ satisfies
\begin{equation}\label{eq:theta^j}
	\theta^{j}:=\left\{\begin{aligned}
		&\tfrac{1}{2}A_\alpha^2D(\alpha)^2K_\alpha^2\exp\left(-\frac{A_\alpha\ell_j}{\e}\right)\left(1+\mathcal{O}\left\{\e^{-1}\exp\left(-\frac{A_\alpha\ell_j}{2\e}\right)\right\}\right),
		 &j \textrm{ odd},\\
		&\tfrac{1}{2}A_\beta^2D(\beta)^2K_\beta^2\exp\left(-\frac{A_\beta\ell_j}{\e}\right)\left(1+\mathcal{O}\left\{\e^{-1}\exp\left(-\frac{A_\beta\ell_j}{2\e}\right)\right\}\right), 	
		\qquad &j \textrm{ even},
		\end{aligned}\right.
\end{equation}
where 
$$A_\alpha:=\frac{\sqrt{G''(\alpha)}}{D(\alpha)}, \qquad \qquad A_\beta:=\frac{\sqrt{G''(\beta)}}{D(\beta)},$$
$$K_\alpha:=(\beta-\alpha)\exp\left\{\int_{\alpha}^{\frac{\alpha+\beta}{2}}\left(\frac{A_\alpha D(t)}{\sqrt{2G(t)}}-\frac{1}{t-\alpha}\right)\,dt\right\},$$
and
$$K_\beta:=(\beta-\alpha)\exp\left\{\int_{\frac{\alpha+\beta}{2}}^{\beta}\left(\frac{A_\beta D(t)}{\sqrt{2G(t)}}-\frac{1}{\beta-t}\right)\,dt\right\}.$$
Equations \eqref{eq:h_j-intro} generalize the result of Carr and Pego \cite{Carr-Pego} 
because if we choose $D\equiv1$ and $f$ satisfying \eqref{eq:ass-f}-\eqref{eq:ass-int0}-\eqref{eq:ass-int1} with $\alpha=-1$ and $\beta=1$,
we recover their equations.
Hence, we are able to analyze the role played by the diffusion coefficient $D$ in the slow motion of the layers. 
The qualitative behavior of $h_j$ does not change and the analysis contained in \cite[Section 6]{Carr-Pego} is still valid for \eqref{eq:h_j-intro}.
However, the constant $S_G$ and the function $\theta^j$ depend on the function $D$ which, 
as a consequence, plays a crucial role on the evolution of the transition points $h_j$.

According to \eqref{eq:h_j-intro}, any point $h_j$ moves to the right or to the left depending on the sign of the difference $\theta^{j+1}-\theta^j$;
to be more precise, if $j$ is odd and $\e>0$ is very small, then $h_j$ moves to the right (left) if 
$$A_\beta\ell_{j+1}<A_\alpha\ell_j, \qquad \qquad (A_\beta\ell_{j+1}>A_\alpha\ell_j).$$
Similarly, if $j$ is even, then $h_j$ moves to the right (left) if 
$$A_\alpha\ell_{j+1}<A_\beta\ell_j, \qquad \qquad (A_\alpha\ell_{j+1}>A_\beta\ell_j).$$
As a consequence, the direction in which $h_j$ moves depends on $D$, because the constants $A_\alpha, A_\beta$ depend on $D$.
In the special case $A_\beta=A_\alpha$, the transition point $h_j$ is simply attracted by the closest layer ($h_{j-1}$ or $h_{j+1}$);
moreover, if the distance $\ell_i$ between two layers is much smaller than the others, then $\theta^i$ is much bigger than $\theta^j$, for any $j\neq i$ and 
system \eqref{eq:h_j-intro} depends essentially on a single ODE (for further details, see the analysis in \cite[Section 6]{Carr-Pego}, which is valid for the case $D\equiv1$). 
In this case, the two closest layers move towards each other with approximately the same speed and the other $N-2$ points are essentially static.
In the general case $A_\alpha\neq A_\beta$, we have a similar behavior but the condition to find the fastest points depends 
on the minimum between $A_\alpha\ell_{2j-1}$ and $A_\beta\ell_{2j}$.

Finally, notice that the speed of any point $h_j$ is exponentially small as $\e\to0^+$, being
$$\theta^j=\mathcal{O}\left\{\exp\left(-\frac{A\ell_j}{\e}\right)\right\}, \qquad \qquad \mbox{ for any } j=1,\dots,N,$$
where $A:=\min\{A_\alpha,A_\beta\}$.
Thus, also the speed of the points strongly depends on $D$ and, clearly, the dynamics is faster if we choose a diffusion coefficient $D$ such that $A\gg1$.
In Section \ref{sec:num}, we shall present some numerical solutions showing the slow evolution of the metastable states and validating the aforementioned behavior.

The rest of the paper is organized as follows.
We start our analysis in Section \ref{sec:stat} by proving the existence of particular stationary solutions for \eqref{eq:D-model}.
First, we consider stationary waves connecting the stable equilibria $\alpha,\beta$ and, as an additional result, we prove that 
if $D(\alpha)=D(\beta)=0$, then the stationary waves touch the two equilibria and as a consequence there exist steady states 
in the bounded interval $[a,b]$ with an arbitrary number of transitions if $\e>0$ is sufficiently small, see Proposition \ref{prop:comp}.
These solutions are known in literature as \emph{compactons}, see \cite{Dra-Rob,Nodea} and references therein.
Next, we introduce the periodic solutions that we use to construct the metastable states in $[a,b]$ and to define the functions $\theta^j$ appearing in \eqref{eq:h_j-intro}.
In Section \ref{sec:est-per}, we prove some estimates for the periodic solutions, that will be used to derive the system \eqref{eq:h_j-intro}.
In particular, the main result of Section \ref{sec:est-per} is Theorem \ref{thm:teta-lead}, where we prove the crucial expansion for $\theta^j$.
Section \ref{sec:layer} contains the derivation of the system \eqref{eq:h_j-intro}, that is the main result of the paper.
Following \cite{Carr-Pego}, we use the periodic solutions introduced in Section \ref{sec:stat} to approximate a metastable state $u^{\bm h(t)}$, 
with $N$ layers located at $\bm h:=(h_1,\dots, h_N)$. 
As in \cite{Carr-Pego}, we write the solution in the form $u=u^{\bm h}+v$, 
where the remainder $v$ is orthogonal in $L^2$ to appropriate {\it tangent vectors} $\tau^{\bm h}_j$. 
The particular choice of the functions $\tau^{\bm h}_j$, depending on the diffusion coefficient $D$, allows us to derive system \eqref{eq:h_j-intro}
and to generalize the result of \cite{Carr-Pego}. 
Finally, in Section \ref{sec:num} we present some numerical solutions illustrating the metastable dynamics.
 
\section{The stationary problem}\label{sec:stat}
In this section we prove existence of some particular solutions to the stationary problem associated to \eqref{eq:D-model}, namely the ordinary differential equation
\begin{equation}\label{eq:stat}
	\e^2(D(\varphi)\varphi')'-f(\varphi)=0,
\end{equation}
where $\varphi:=\varphi(x)$ and $'=\frac{d}{dx}$.
Multiplying equation \eqref{eq:stat} by $D(\varphi)\varphi'$ and using the definition \eqref{eq:G}, we obtain
$$\left[\frac{\e^2}{2}\left(D(\varphi)\varphi'\right)^2-G(\varphi)\right]'=0,$$
and, as a trivial consequence, 
\begin{equation}\label{eq:firstorder}
	\frac{\e^2}{2}\left(D(\varphi)\varphi'\right)^2=G(\varphi)+C,
\end{equation}
for some appropriate constant $C\in\R$.
\subsection{Stationary waves}
In particular, by choosing $C=0$ we end up with two constant solutions, $\varphi\equiv\alpha$ and $\varphi\equiv\beta$, and the following heteroclinic orbits, 
defined in the whole real line.
\begin{lem}\label{lem:stand}
Consider equation \eqref{eq:stat} with $D,f$ satisfying \eqref{eq:ass-D}-\eqref{eq:ass-f} and \eqref{eq:ass-int0}-\eqref{eq:ass-int1}.
Then, there exists a unique increasing solution $\Phi$ to \eqref{eq:stat}, subject to
$$\Phi(0)=\frac{\alpha+\beta}{2}, \qquad \lim_{x\to-\infty}\Phi(x)=\alpha, \qquad  \lim_{x\to\infty}\Phi(x)=\beta.$$
Moreover, $\Phi$ satisfies the following {\it exponential decay}:
\begin{equation*}
	\begin{aligned}
		&\Phi(x)-\alpha\leq c_1 e^{c_2x}, &\mbox{as } x\to-\infty, \\
		&\beta-\Phi(x)\leq c_1 e^{-c_2x}, \qquad &\mbox{as } x\to+\infty,
	\end{aligned}
\end{equation*}
for some $c_1,c_2>0$.
Similarly, there exists a unique decreasing solution $\Psi$ to \eqref{eq:stat}, subject to
$$\Psi(0)=\frac{\alpha+\beta}{2}, \qquad \lim_{x\to-\infty}\Phi(x)=\beta, \qquad  \lim_{x\to\infty}\Phi(x)=\alpha,$$
and $\Psi$ satisfies 
\begin{equation*}
	\begin{aligned}
		&\beta-\Phi(x)\leq c_1 e^{c_2x}, &\mbox{as } x\to-\infty, \\
		&\Phi(x)-\alpha\leq c_1 e^{-c_2x}, \qquad &\mbox{as } x\to+\infty,
	\end{aligned}
\end{equation*}
for some $c_1,c_2>0$.
\end{lem}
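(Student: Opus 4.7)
The starting point is the first integral \eqref{eq:firstorder}. For any solution approaching a zero of $f$ as $x\to\pm\infty$, one has $\varphi'\to 0$ and $G(\varphi)\to G(\alpha)=G(\beta)=0$, so the constant is forced to be $C=0$. This collapses the second-order ODE to the separable first-order equation
$$\e\,D(\varphi)\,\varphi' = \pm\sqrt{2G(\varphi)},$$
where the positive sign yields the increasing branch $\Phi$ and the negative sign the decreasing branch $\Psi$.

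To build $\Phi$, I would use separation of variables: imposing $\Phi(0)=(\alpha+\beta)/2$ defines $\Phi$ implicitly by
$$\int_{(\alpha+\beta)/2}^{\Phi(x)}\frac{D(s)}{\sqrt{2G(s)}}\,ds = \frac{x}{\e}.$$
From \eqref{eq:ass-G}, near $\alpha$ one has $\sqrt{2G(s)}=\sqrt{G''(\alpha)}\,(s-\alpha)\bigl(1+O(s-\alpha)\bigr)$, so the integrand behaves like $D(\alpha)/\bigl(\sqrt{G''(\alpha)}\,(s-\alpha)\bigr)$ and is not integrable at $\alpha$; the symmetric statement holds at $\beta$. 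Consequently the left-hand side, viewed as a function of the upper limit, is a strictly increasing $C^2$ bijection from $(\alpha,\beta)$ onto $\R$, and its inverse is precisely the desired solution $\Phi:\R\to(\alpha,\beta)$. Smoothness and strict monotonicity follow from the inverse function theorem together with \eqref{eq:ass-D} and \eqref{eq:ass-G}; uniqueness, once $\Phi(0)$ is fixed, is immediate from the first-order reduction.

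For the exponential decay near $-\infty$, I would linearize: setting $\psi:=\Phi-\alpha$, the equation becomes
$$\e\,D(\alpha)\,\psi' = \sqrt{G''(\alpha)}\,\psi\bigl(1+O(\psi)\bigr),$$
which by a standard Gronwall-type comparison yields $\psi(x)\le c_1 e^{c_2 x}$ with $c_2 = \sqrt{G''(\alpha)}/\bigl(\e\,D(\alpha)\bigr)$. An alternative route — arguably cleaner — is to subtract and add the singular term $1/(s-\alpha)$ inside the implicit formula: writing
$$\int\frac{D(s)}{\sqrt{2G(s)}}\,ds = \frac{D(\alpha)}{\sqrt{G''(\alpha)}}\log(s-\alpha) + R(s),$$
with $R$ bounded near $\alpha$, inversion produces the exponential bound together with an explicit leading coefficient. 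The same argument near $\beta$ gives the bound on $\beta-\Phi(x)$ as $x\to+\infty$. This computation is precisely the one that later defines the constants $K_\alpha, K_\beta$ appearing in \eqref{eq:theta^j}.

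Finally, $\Psi$ is obtained by the reflection symmetry of \eqref{eq:stat}: since the equation is invariant under $x\mapsto -x$, the function $\Psi(x):=\Phi(-x)$ is a solution, satisfies $\Psi(0)=(\alpha+\beta)/2$, inherits the correct limits and decay estimates from $\Phi$, and is the unique decreasing heteroclinic by the first-order reduction with negative sign. The main technical obstacle is not existence itself, which is a textbook separation-of-variables argument, but the extraction of \emph{exponential} (rather than merely algebraic) decay from the implicit integral formula; this requires isolating the logarithmic singularity of the integrand cleanly, and it is this step that controls the size of the constants $c_1, c_2$ and, downstream, of $K_\alpha$ and $K_\beta$.
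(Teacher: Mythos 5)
Your proposal follows essentially the same route as the paper: set $C=0$ in the first integral \eqref{eq:firstorder}, separate variables to obtain the implicit formula $\int_{(\alpha+\beta)/2}^{\Phi(x)} D(s)/\sqrt{2G(s)}\,ds = x/\e$, and read off existence and the limits from the quadratic behavior of $G$ at $\alpha,\beta$. The paper's proof is in fact terser than yours --- it does not spell out the uniqueness argument or the extraction of the exponential decay rate, both of which you supply correctly --- and it constructs $\Psi$ via the Cauchy problem with the negative sign rather than by the reflection $x\mapsto -x$, but these are cosmetic differences.
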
 
\begin{proof}
First, let us choose $C=0$ in \eqref{eq:firstorder} and consider the Cauchy problem
\begin{equation*}
	\begin{cases}
		\e D(\Phi)\Phi'=\sqrt{2G(\Phi)},\\
		\Phi(0)=\frac{\alpha+\beta}{2}.
	\end{cases}
\end{equation*}
By using the standard separation of variable, it follows that the solution $\Phi$ is implicitly defined by
$$\int_\frac{\alpha+\beta}{2}^{\Phi(x)}\frac{D(s)}{\sqrt{2G(s)}}\,ds=\frac{x}{\e},$$
and the existence of the increasing solution is a consequence of \eqref{eq:ass-D}-\eqref{eq:ass-G} because 
$$G(s)\sim(s-\alpha)^2, \quad s\to\alpha \qquad \mbox{ and } \qquad G(s)\sim(s-\beta)^2, \quad s\to\beta.$$
Similarly, the existence of the decreasing solution can be proved by considering the Cauchy problem
\begin{equation*}
	\begin{cases}
		\e D(\Psi)\Psi'=-\sqrt{2G(\Psi)},\\
		\Psi(0)=\frac{\alpha+\beta}{2},
	\end{cases}
\end{equation*}
and the proof is complete.
\end{proof}
It has to be noticed that each assumption on $D,f$ plays a crucial role in Lemma \ref{lem:stand}.
Let us, for instance, consider the case when $D$ is positive but vanishes at $u=\alpha,\beta$: 
for simplicity's sake, assume
\begin{equation}\label{eq:D-deg}
	D(s)\sim(s-\alpha)^m, \mbox{ as } s\to\alpha^+, \qquad D(s)\sim(\beta-s)^m, \mbox{ as } s\to\beta^-,
\end{equation}
for some $m\geq1$.
In this case, $G''(\alpha)=G''(\beta)=0$ and 
$$G(s)\sim(s-\alpha)^{m+2}, \quad s\to\alpha^+, \qquad \quad \qquad G(s)\sim(\beta-s)^{m+2}, \quad s\to\beta^-.$$
As a consequence, one has
\begin{align}
	\int_\frac{\alpha+\beta}{2}^{\beta}\frac{D(s)}{\sqrt{2G(s)}}\,ds\sim\int_\frac{\alpha+\beta}{2}^{\beta}(\beta-s)^{\frac{m}{2}-1}<\infty,\label{eq:touch-beta}\\
	\int_{\alpha}^\frac{\alpha+\beta}{2}\frac{D(s)}{\sqrt{2G(s)}}\,ds\sim\int_{\alpha}^\frac{\alpha+\beta}{2}(s-\alpha)^{\frac{m}{2}-1}<\infty, \label{eq:touch-alpha}
\end{align}
for any $m\geq1$.
The finiteness of the two latter integrals implies the following result.
\begin{lem}\label{lem:touch}
Suppose that $D(u)>0$ for any $u\neq\alpha,\beta$ and that $D,f$ satisfy \eqref{eq:ass-f}, \eqref{eq:ass-int0}-\eqref{eq:ass-int1} and \eqref{eq:D-deg}. 
Then, equation \eqref{eq:stat} admits a solution $\Phi$ satisfying $\Phi(0)=\frac{\alpha+\beta}{2}$,
\begin{equation}\label{ugualea1e-1}
	\Phi(x)=\alpha, \quad x\leq x^\e_1, \qquad  \Phi(x)=\beta, \quad x\geq x^\e_2, \qquad \Phi'(x)>0, \quad x\in(x^\e_1,x^\e_2),
\end{equation}
where 
\begin{equation}\label{eq:x-eps}
	x^\e_1=-\e \bar x_1, \qquad \qquad x^\e_2=\e \bar x_2,
\end{equation}
for some $\bar x_i>0$, for $i=1,2$ which do not depend on $\e$.

Similarly, equation \eqref{eq:stat} admits a solution $\Psi$ satisfying $\Psi(0)=\frac{\alpha+\beta}{2}$,
\begin{equation*}
	\Psi(x)=\beta, \quad x\leq -x^\e_2, \qquad  \Psi(x)=\alpha, \quad x\geq -x^\e_1, \qquad \Psi'(x)<0, \quad x\in(-x^\e_2,-x^\e_1),
\end{equation*}
with $x^\e_i$ as in \eqref{eq:x-eps}.
\end{lem}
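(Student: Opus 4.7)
The plan is to mimic the proof of Lemma \ref{lem:stand}, but exploit the crucial fact that, under the degeneracy assumption \eqref{eq:D-deg}, the improper integrals \eqref{eq:touch-beta} and \eqref{eq:touch-alpha} converge. Starting from the Cauchy problem
$$\e D(\Phi)\Phi'=\sqrt{2G(\Phi)}, \qquad \Phi(0)=\tfrac{\alpha+\beta}{2},$$
separation of variables yields (as in Lemma \ref{lem:stand}) the implicit relation
$$\int_{\frac{\alpha+\beta}{2}}^{\Phi(x)}\frac{D(s)}{\sqrt{2G(s)}}\,ds=\frac{x}{\e}.$$
On $(\alpha,\beta)$ the integrand is positive and locally integrable, so this defines an increasing $C^1$ function $\Phi(x)$ on an open interval around $0$.

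Next I would set
$$\bar x_1:=\int_{\alpha}^{\frac{\alpha+\beta}{2}}\frac{D(s)}{\sqrt{2G(s)}}\,ds,\qquad \bar x_2:=\int_{\frac{\alpha+\beta}{2}}^{\beta}\frac{D(s)}{\sqrt{2G(s)}}\,ds,$$
which are finite by \eqref{eq:touch-beta}--\eqref{eq:touch-alpha} and do not depend on $\e$. The implicit relation then shows that $\Phi$ is well defined and strictly increasing on the finite interval $(x_1^\e,x_2^\e):=(-\e\bar x_1,\e\bar x_2)$, with $\Phi(x)\to\alpha$ as $x\to(x_1^\e)^+$ and $\Phi(x)\to\beta$ as $x\to(x_2^\e)^-$. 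Extending $\Phi$ by the constants $\alpha$ and $\beta$ outside this interval produces a function satisfying \eqref{ugualea1e-1} with $x^\e_i$ as in \eqref{eq:x-eps}.

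The step I expect to require the most care is verifying that the glued function is genuinely a solution of the (conservation-form) equation \eqref{eq:stat}. The constant branches obviously satisfy the equation since $f(\alpha)=f(\beta)=0$. At the gluing points $x^\e_i$, one has to check the matching of the flux $D(\Phi)\Phi'$: from the first-order identity $\e D(\Phi)\Phi'=\sqrt{2G(\Phi)}$ valid on the interior, the flux tends to $0$ as $\Phi\to\alpha^+$ or $\Phi\to\beta^-$, and it is identically $0$ on the constant branches, so $D(\Phi)\Phi'$ is continuous across $x^\e_1$ and $x^\e_2$. Since $(D(\Phi)\Phi')'=\e^{-2}f(\Phi)$ is continuous on each open branch and the primitive $D(\Phi)\Phi'$ is continuous globally, the equation \eqref{eq:stat} is satisfied in the classical sense on $\R\setminus\{x^\e_1,x^\e_2\}$ and in the distributional sense on all of $\R$.

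Finally, the corresponding statement for $\Psi$ follows by the same argument applied to the Cauchy problem with $-\sqrt{2G(\Psi)}$ on the right-hand side, or simply by noting that $\Psi(x):=\Phi(-x)$ is the required decreasing solution after translating so that $\Psi(0)=\frac{\alpha+\beta}{2}$; the symmetric roles of $\alpha$ and $\beta$ yield the interval $(-x^\e_2,-x^\e_1)$ with $x^\e_i$ as in \eqref{eq:x-eps}.
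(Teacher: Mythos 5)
Your proposal is correct and follows essentially the same route as the paper: separation of variables for the first-order reduction, finiteness of the integrals \eqref{eq:touch-beta}--\eqref{eq:touch-alpha} to identify the touching points $x^\e_i=\pm\e\bar x_i$, and extension by the constants $\alpha,\beta$. The only addition is your explicit check that the flux $D(\Phi)\Phi'$ matches continuously (vanishing) at the gluing points, a point the paper leaves implicit in the lemma and only touches on in the subsequent remark about the $C^1$ regularity of $\Phi$.
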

\begin{proof}
As in Lemma \ref{lem:stand}, choosing $C=0$ in \eqref{eq:firstorder}, we obtain the Cauchy problem
\begin{equation*}
	\begin{cases}
		\e D(\Phi)\Phi'=\sqrt{2G(\Phi)},\\
		\Phi(0)=\frac{\alpha+\beta}{2},
	\end{cases}
\end{equation*}
whose solution is implicitly given by 
$$\int_\frac{\alpha+\beta}{2}^{\Phi(x)}\frac{D(s)}{\sqrt{2G(s)}}\,ds=\frac{x}{\e}.$$
In view of \eqref{eq:touch-beta}, we deduce that $\Phi(x^\e_2)=\beta$ with
$$x^\e_2=\e\int_\frac{\alpha+\beta}{2}^{\beta}\frac{D(s)}{\sqrt{2G(s)}}\,ds.$$
On the other hand, from \eqref{eq:touch-alpha} it follows that $\Phi(x^\e_1)=\alpha$ with
$$x^\e_1=-\e\int_{\alpha}^\frac{\alpha+\beta}{2}\frac{D(s)}{\sqrt{2G(s)}}\,ds.$$
Since the solution reaches the values $\alpha$ and $\beta$, we obtain a solution of \eqref{eq:stat} defined in the whole real line, which satisfies \eqref{ugualea1e-1}-\eqref{eq:x-eps}.
The proof of the existence of the non-increasing function $\Psi$ is very similar.
\end{proof}
\begin{rem}
Notice that the solutions $\Phi,\Psi$ constructed in Lemma \ref{lem:touch} are $C^1(\R)$ if and only if $m\in[1,2)$ in \eqref{eq:D-deg}.
Indeed, in the non-decreasing case, the derivative satisfies $\Phi'(x)=0$, for any $x\leq x^\e_1$ and $x\geq x^\e_2$, while
$$\e \Phi'=\frac{\sqrt{2G(\Phi)}}{D(\Phi)}, \qquad \qquad \mbox{ in } (x^\e_1,x^\e_2).$$
In particular, 
$$\lim_{x\to (x^\e_2)^-}\Phi'(x)=\frac{\sqrt2}{\e}\lim_{x\to (x^\e_2)^-}(\beta-\Phi(x))^{1-\frac{m}2}=0 \qquad \mbox{ if and only if } \qquad m\in[1,2).$$
\end{rem}

\subsection{Existence of compactons in the degenerate case}

Thanks to Lemma \ref{lem:touch}, we can construct special stationary solutions, called \emph{compactons} (see \cite{Nodea} and references therein), 
to the boundary value problem \eqref{eq:D-model}-\eqref{eq:Neu}.

\begin{prop}\label{prop:comp}
Suppose that $D(u)>0$ for any $u\neq\alpha,\beta$ and that $D,f$ satisfy \eqref{eq:ass-f}, \eqref{eq:ass-int0}-\eqref{eq:ass-int1} and \eqref{eq:D-deg}. 
Let $N\in \mathbb{N}$ and let $h_1,h_2,\dots, h_N$ be any $N$ real numbers such that $a<h_1<h_2<\cdots <h_N<b$. 
Then, for any $\e\in(0,\bar\e)$ with $\bar\varepsilon>0$ sufficiently small, there exist two solutions $\varphi_1$ and $\varphi_2$ to \eqref{eq:D-model}-\eqref{eq:Neu} satisfying 
$\varphi_1(a)=\alpha$ and $\varphi_2(a)=\beta$, respectively, oscillating between $\alpha$ and $\beta$ and satisfying
\begin{equation}\label{eq:zeros}
	\varphi_i(x)=\frac{\alpha+\beta}{2}, \qquad \mbox{ if and only if } \qquad x\in\left\{h_1,h_2,\dots, h_N\right\}.
\end{equation} 
\end{prop}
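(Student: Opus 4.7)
The plan is to build the compactons by concatenating suitably translated copies of the building-block solutions $\Phi$ and $\Psi$ from Lemma \ref{lem:touch} with constant plateaus at $\alpha$ and $\beta$. The key observation is that, thanks to \eqref{eq:x-eps}, each transition of $\Phi$ or $\Psi$ occupies a spatial interval of length $\e(\bar x_1+\bar x_2)$, so by taking $\e$ small enough the transitions centered at the prescribed points $h_1,\dots,h_N$ remain disjoint and far from the boundary.

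More precisely, I would first fix
$$\bar \e:=\frac{1}{\bar x_1+\bar x_2}\,\min\bigl\{h_1-a,\;b-h_N,\;h_2-h_1,\dots,h_N-h_{N-1}\bigr\},$$
so that for $\e\in(0,\bar\e)$ the intervals $I_j^\e:=[h_j+x_1^\e,\,h_j+x_2^\e]$ are pairwise disjoint and contained in $(a,b)$. I would then define $\varphi_1$ on $[a,b]$ as follows: on $I_j^\e$, set $\varphi_1(x):=\Phi(x-h_j)$ if $j$ is odd and $\varphi_1(x):=\Psi(x-h_j)$ if $j$ is even; on the complement of $\bigcup_j I_j^\e$, extend $\varphi_1$ by the constant value it takes at the adjacent endpoint of the nearest $I_j^\e$, so that $\varphi_1\equiv\alpha$ on $[a,h_1+x_1^\e]$, $\varphi_1\equiv\beta$ between $I_1^\e$ and $I_2^\e$, and so on alternating. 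The function $\varphi_2$ is defined analogously by interchanging the roles of $\Phi$ and $\Psi$ (so that $\varphi_2\equiv\beta$ near $a$).

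I would then check three things. First, the identities \eqref{ugualea1e-1} imply $\varphi_1(h_j)=(\alpha+\beta)/2$ at exactly the prescribed points, giving \eqref{eq:zeros}. Second, the Neumann conditions $\varphi_1'(a)=\varphi_1'(b)=0$ hold because $\varphi_1$ is constant on neighborhoods of $a$ and $b$. Third, $\varphi_1$ solves \eqref{eq:stat} on each $I_j^\e$ by construction (it is a translated copy of $\Phi$ or $\Psi$) and trivially on the plateaus, where $\varphi_1'\equiv0$ and $f(\varphi_1)=f(\alpha)=f(\beta)=0$.

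The only delicate point, and the main obstacle, is the matching across the endpoints of the intervals $I_j^\e$. Here one uses the natural weak/flux formulation: $\varphi_1$ is a solution of \eqref{eq:stat} provided the nonlinear flux $D(\varphi_1)\varphi_1'$ is continuous and \eqref{eq:firstorder} holds with $C=0$. On the plateau side of a gluing point this flux vanishes identically, while on the transition side, \eqref{ugualea1e-1} together with $D(\alpha)=D(\beta)=0$ yields $\lim_{x\to(h_j+x_i^\e)^\mp}D(\Phi(x-h_j))\Phi'(x-h_j)=\sqrt{2G(\alpha)}=0$ (and analogously at $\beta$), so the flux is continuous across each gluing point. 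Hence $\varphi_1$ is a bona fide compacton solution in the sense used in \cite{Nodea}, completing the argument; the construction of $\varphi_2$ is identical modulo the obvious swap between $\Phi$ and $\Psi$.
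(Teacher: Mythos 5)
Your construction is essentially the paper's own proof: both glue translated copies of the compactly supported transitions $\Phi$ and $\Psi$ from Lemma \ref{lem:touch}, alternating them at the prescribed points $h_j$ and using \eqref{eq:x-eps} to shrink $\e$ until the transition intervals are disjoint and away from $a,b$ (you additionally spell out the flux matching at the gluing points, which the paper leaves implicit). The only slip is bookkeeping: for even $j$ the transition of $\Psi(\cdot-h_j)$ occupies $[h_j-x_2^\e,\,h_j-x_1^\e]$ rather than your $I_j^\e=[h_j+x_1^\e,\,h_j+x_2^\e]$, so the intervals and the constant in $\bar\e$ should be adjusted accordingly — this does not affect the validity of the argument.
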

\begin{proof}
We only prove the existence of the solution $\varphi_1$, being the proof of the existence of $\varphi_2$ very similar.
To this aim, we consider the function 
$$\Phi_1(x):=\Phi(x-h_1), \qquad \,  x\in\R,$$ 
where $\Phi$ is the non-decreasing solution of Lemma \ref{lem:touch}. 
Then, $\Phi_1(h_1)=\frac{\alpha+\beta}{2}$ and from \eqref{ugualea1e-1} it follows that 
$$\Phi_1(x)=\alpha, \quad  \mbox{ if } \quad x\in(-\infty, h_1+x_1^\e], \qquad \qquad \Phi_1(x)=\beta, \quad  \mbox{ if } \quad x\in [h_1+x_2^\e, +\infty).$$
Since $x_1^\e$ and $x_2^\e$ satisfy \eqref{eq:x-eps}, we can choose $\e>0$ small enough so that $h_1+x_1^\e >a$; 
as a consequence, the restriction of $\Phi_1$ to the interval $[a,h_1+x_2^\e]$, denoted with the same symbol, turns out to be equal to $\alpha$ for every $x\in [a,h_1+x_1^\e]$
and touching $\beta$ at $h_1+x_2^\e$. 
Hence, we have constructed the first transition located at $h_1$.
In order to construct the second transition, let us assume that $\e>0$ is so small that $h_1+x_2^\e\leq h_2-x_2^\e$ and define
$$\Phi_2(x):=\Psi(x-h_2), \qquad \,  x\in\R,$$ 
where $\Psi$ is the non-increasing solution of Lemma \ref{lem:touch}. 
Again, the restriction of the function $\Phi_{2}$ to the interval $[h_1+x_2^\e, h_{2}-x_1^\e ]$ (still using the same notation), satisfies  
$$\Phi_2(x)=\beta, \quad  \mbox{ if } \quad x\in[h_1+x_2^\e, h_2-x_2^\e], \qquad \Phi_2(h_2)=\frac{\alpha+\beta}{2},  \qquad \Phi_2(h_2-x_1^\e)=\alpha.$$
Hence, by defining
$$\varphi_1(x)=\begin{cases}
\Phi_1(x), \qquad x\in[a,h_1+x_2^\e],\\
\Phi_2(x), \qquad x\in[h_1+x_2^\e,h_2-x_1^\e],
\end{cases}$$
we constructed the desired solution in the interval $[a,h_2-x_1^\e]$ with two transitions located at $h_1,h_2$.
By proceeding in the same way and alternatively using $\Phi$ and $\Psi$, one can construct a solution in the interval $[a,b]$ satisfying \eqref{eq:zeros}.
\end{proof}

We, again, point out that the existence of the compactons of Proposition \ref{prop:comp} is a consequence 
of the boundedness of the integrals in \eqref{eq:touch-beta}-\eqref{eq:touch-alpha}.
As we proved in Lemma \ref{lem:stand}, if $D,f$ satisfy \eqref{eq:ass-D}-\eqref{eq:ass-f} and \eqref{eq:ass-int0}-\eqref{eq:ass-int1},
the (strictly) monotone profiles $\Phi$ and $\Psi$ never touches $\alpha$ and $\beta$ and it is not possible to construct compactons.
\subsection{Periodic solutions}
We conclude this section by proving the existence of solutions to the following problems:
given $\ell>0$, let $\varphi(\cdot,\ell,+1)$ be the solution to 
\begin{equation}\label{eq:per-l}
	\e^2(D(\varphi)\varphi_x)_x-f(\varphi)=0, \qquad \quad
	\varphi\bigl(-\tfrac12\ell\bigr)=\varphi\bigl(\tfrac12\ell\bigr)=\frac{\alpha+\beta}{2},
\end{equation}
with $\varphi>\frac{\alpha+\beta}{2}$ in $(-\tfrac12\ell,\tfrac12\ell)$, and let $\varphi(\cdot,\ell,-1)$ be the solution to \eqref{eq:per-l} with $\varphi<\frac{\alpha+\beta}{2}$ in $(-\tfrac12\ell,\tfrac12\ell)$. 
These special solutions shall play a crucial role in Section \ref{sec:layer}, where we study the layer dynamics for the metastable solutions of \eqref{eq:D-model}-\eqref{eq:Neu}.
\begin{prop}\label{prop:ell}
Consider problem \eqref{eq:per-l} with $D,f$ satisfying \eqref{eq:ass-D}-\eqref{eq:ass-f} and \eqref{eq:ass-int0}-\eqref{eq:ass-int1}.
Then, there exists $\rho_0>0$ such that if $\e/\ell<\rho_0$, then the functions $\varphi(\cdot,\ell,\pm1)$ are well-defined, they are smooth and depend on $\e$ and $\ell$
only through the ratio $\e/\ell$.
\end{prop}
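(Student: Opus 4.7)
I would reduce the boundary-value problem \eqref{eq:per-l} to a scalar equation for the maximum (or minimum) value of $\varphi$ by exploiting the conserved quantity \eqref{eq:firstorder}, and then apply the implicit function theorem to obtain existence, uniqueness, and smoothness in $\ell/\e$. I focus on $\varphi(\cdot,\ell,+1)$; the case $-1$ is symmetric. Any classical solution of \eqref{eq:per-l} with $\varphi>(\alpha+\beta)/2$ on $(-\tfrac12\ell,\tfrac12\ell)$ must be even about $x=0$ and attain its maximum $M\in((\alpha+\beta)/2,\beta)$ at the origin, with $\varphi_x(0)=0$. Evaluating \eqref{eq:firstorder} at $x=0$ forces $C=-G(M)$, so separating variables on $(0,\tfrac12\ell)$ and integrating gives
\begin{equation*}
I(M) \;:=\; \int_{(\alpha+\beta)/2}^{M}\frac{D(s)}{\sqrt{2\bigl(G(s)-G(M)\bigr)}}\,ds \;=\; \frac{\ell}{2\e}.
\end{equation*}
Thus existence of $\varphi(\cdot,\ell,+1)$ is equivalent to solvability of this scalar equation in $M$, and once $M$ is known the profile is recovered by quadrature from \eqref{eq:firstorder}.

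\textbf{Analysis of $I$.} The function $I$ is well-defined and smooth on the open set $\mathcal{M}\subset((\alpha+\beta)/2,\beta)$ of admissible maxima, consisting of those $M$ for which $G(s)-G(M)>0$ strictly between $(\alpha+\beta)/2$ and $M$. Using $G(s)\sim\tfrac12 G''(\beta)(\beta-s)^2$ from \eqref{eq:ass-G}, a direct estimate yields
\begin{equation*}
I(M)\;\sim\;-\,A_\beta^{-1}\ln(\beta-M) \qquad\text{as } M\to\beta^{-},
\end{equation*}
with $A_\beta=\sqrt{G''(\beta)}/D(\beta)$; in particular $I(M)\to+\infty$. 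At the lower boundary of $\mathcal{M}$, $I$ is bounded by some $I_0\ge 0$. The key step is strict monotonicity $I'(M)>0$ on $\mathcal{M}$. I would regularize the endpoint singularity at $s=M$ by the change of variables $s=\tfrac{\alpha+\beta}{2}+\bigl(M-\tfrac{\alpha+\beta}{2}\bigr)\sigma$ with $\sigma\in(0,1)$, after which $I$ becomes a Lebesgue integral with a parameter $M$ in a genuinely smooth integrand; differentiating under the integral and exploiting $G(s)>G(M)$ together with $D>0$ yield $I'(M)>0$. Hence $I$ is a diffeomorphism of $\mathcal{M}$ onto $(I_0,+\infty)$.

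\textbf{Conclusion and scaling.} Choosing $\rho_0>0$ with $(2\rho_0)^{-1}>I_0$, every $\e/\ell<\rho_0$ gives $\ell/(2\e)>I_0$, hence yields a unique $M=M(\ell/\e)\in\mathcal{M}$ solving the reduced equation, smooth in the parameter $\ell/\e$ by the implicit function theorem. The reconstruction of $\varphi$ from \eqref{eq:firstorder} is smooth in $x$ by classical ODE theory applied to the non-degenerate second-order equation \eqref{eq:stat} (the apparent singularity at $x=0$ is only of the quadrature and disappears once one sees $\varphi$ as the even classical solution of the original non-degenerate ODE, whose right-hand side $f(\varphi)/(\e^2 D(\varphi))$ is smooth thanks to \eqref{eq:ass-D}). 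Finally, the rescaling $\Xi(\xi):=\varphi(\ell\xi)$ converts \eqref{eq:per-l} into $(\e/\ell)^{2}(D(\Xi)\Xi_\xi)_\xi=f(\Xi)$ on $(-\tfrac12,\tfrac12)$ with $\Xi(\pm\tfrac12)=(\alpha+\beta)/2$, an equation in which only the ratio $\e/\ell$ appears; hence $\varphi(x,\ell,\pm1)=\Xi(x/\ell)$ depends on $(\e,\ell)$ exclusively through $\e/\ell$.

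\textbf{Main obstacle.} The delicate point is establishing $I'(M)>0$ on $\mathcal{M}$, because direct differentiation under the integral is obstructed by the inverse-square-root singularity at the upper endpoint $s=M$; the change-of-variables trick above (or, equivalently, a phase-plane description of the family of closed orbits surrounding the center at the top of the potential well of $-G$) is what makes the monotonicity transparent. Everything else reduces to standard implicit-function and ODE arguments.
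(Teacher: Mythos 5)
Your proposal follows essentially the same route as the paper: use the first integral \eqref{eq:firstorder} to reduce the boundary-value problem to the scalar ``time-map'' equation (the paper's \eqref{eq:int-l-r}, your $I(M)=\ell/(2\e)$), observe that the time map diverges logarithmically as the turning value approaches the stable equilibrium, invoke monotonicity to solve uniquely for the turning value when $\e/\ell$ is small, and recover the profile by quadrature; your closing rescaling argument makes explicit the $\e/\ell$-dependence that the paper leaves implicit in the notation $r=\e/\ell$. The one place you overreach is the claim that $I'(M)>0$ on the \emph{entire} admissible set of turning values, so that $I$ is a global diffeomorphism onto $(I_0,+\infty)$: the sign argument you sketch (differentiating under the integral after regularizing the endpoint and ``exploiting $G(s)>G(M)$ and $D>0$'') does not obviously produce a definite sign, since the derivative of the integrand involves $G'(s)\sigma-G'(M)$, whose sign is not controlled away from the equilibrium --- global monotonicity of such time maps is a classically delicate question. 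Fortunately this is not needed: as in the paper, monotonicity of the time map only for turning values close to $\beta$ (respectively $\alpha$), where the logarithmic asymptotics dominate, already yields a unique solution with $M$ near $\beta$ for all sufficiently small $\e/\ell$, which is exactly what the proposition asserts. If you restrict your monotonicity claim to a neighborhood of the equilibrium, your argument is complete and coincides with the paper's.
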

\begin{proof}
Let us prove that there exists a unique solution $\varphi(\cdot):=\varphi(\cdot,\ell,-1)$ of the problem \eqref{eq:per-l} satisfying the additional condition $\varphi<\frac{\alpha+\beta}{2}$ in $(-\tfrac12\ell,\tfrac12\ell)$; the proof for $\varphi(\cdot,\ell,+1)$ is similar. 
Proceeding as was done to obtain \eqref{eq:firstorder} and using the boundary conditions in \eqref{eq:per-l}, we end up with
\begin{equation}\label{eq:phipm-first}
	\e^2\left[D(\varphi)\varphi_x\right]^2=2\left[G(\varphi)-G(\varphi(0))\right].
\end{equation}
Hence, by using separation of variables and integrating in $[0,\ell/2]$, we conclude that any solution to \eqref{eq:per-l} satisfies 
\begin{equation}\label{eq:int-l-r}
	\sqrt2\int_{\varphi(0)}^{\frac{\alpha+\beta}{2}}\frac{D(s)}{\sqrt{G(s)-G(\varphi(0))}}\,ds=\ell\e^{-1}=:r^{-1}.
\end{equation}
The solution we are looking for exists if and only if there exists $\varphi(0)<\frac{\alpha+\beta}{2}$ such that \eqref{eq:int-l-r} holds true.
It is worthy to mention that the improper integral in \eqref{eq:int-l-r} is finite if $\varphi(0)$ is sufficiently close to $\alpha$;
indeed, the assumption on $D,f$ \eqref{eq:ass-D}-\eqref{eq:ass-f}-\eqref{eq:ass-int0}-\eqref{eq:ass-int1} ensure that $G$ satisfies \eqref{eq:ass-G} and, 
as a consequence, $G'(\varphi(0))>0$ if $\varphi(0)$ is sufficiently close to $\alpha$, namely
$$\int_{\varphi(0)}^{\frac{\alpha+\beta}{2}}\frac{D(s)}{\sqrt{G(s)-G(\varphi(0))}}\,ds\sim\int_{\varphi(0)}^{\frac{\alpha+\beta}{2}}\frac{D(s)}{\sqrt{G'(\varphi(0))(s-\varphi(0))}}\,ds<\infty.$$
We shall give more details about equation \eqref{eq:int-l-r} in the next section; here, it is sufficient to notice that the integral in \eqref{eq:int-l-r} is a monotone function of $\varphi(0)$ 
for $\varphi(0)$ close to $\alpha$ and that 
$$\lim_{\varphi(0)\to\alpha^+}\sqrt2\int_{\varphi(0)}^{\frac{\alpha+\beta}{2}}\frac{D(s)}{\sqrt{G(s)-G(\varphi(0))}}\,ds=
\sqrt2\int_{\alpha}^{\frac{\alpha+\beta}{2}}\frac{D(s)}{\sqrt{G(s)}}\,ds=+\infty,$$
because of \eqref{eq:ass-D} and \eqref{eq:ass-G}.
Therefore, if $r>0$ is sufficiently small, then there exists a unique $\varphi(0)$ (depending on $r$ and close to $\alpha$) such that \eqref{eq:int-l-r} is satisfied.
\end{proof}

\section{Estimates for the $\ell$-periodic solutions}\label{sec:est-per}
In this section, we prove some useful properties of the solutions $\varphi(x,\ell,\pm1)$, whose existence has been proved in Proposition \ref{prop:ell}.
To be more precise, we establish some important expansions as $\e\to0^+$, which play a crucial role in the following section. 
Since $\varphi(0,\ell,\pm1)$ depends only on the ratio $r=\varepsilon/\ell$ (see Proposition \ref{prop:ell}), we can define
\begin{equation}\label{eq:teta-delta}
	\theta_\pm(r):=G(\varphi(0,\ell,\pm1)), \quad \delta_+(r):=\beta-\varphi(0,\ell,+1), \quad \delta_-(r):=\varphi(0,\ell,-1)-\alpha.
\end{equation}
As we saw in the proof of Proposition \ref{prop:ell}, $\varphi(0,\ell,-1)$ is close to $\alpha$ for $r>0$ small; similarly, $\varphi(0,\ell,+1)$ is close to $\beta$ and so
$\theta_\pm(r), \delta_\pm(r)$ goes to $0$ as $r\to0^+$. 
The main result of this section characterizes the leading terms in $\theta_\pm,\delta_\pm$ as $r\to 0$;
in particular, we show that  $\varphi(0,\ell,-1)$ is exponentially close to $\alpha$ and $\varphi(0,\ell,+1)$ is exponentially close to $\beta$ as $r\to0^+$.
Roughly speaking, the functions $\theta_\pm,\delta_\pm$ are exponentially small as $r\to0^+$.
\begin{thm}\label{thm:teta-lead}
Assume that $D,f$ satisfies \eqref{eq:ass-D}-\eqref{eq:ass-f}-\eqref{eq:ass-int0}-\eqref{eq:ass-int1}.
Then, there exists $r_0>0$ such that for $r\in(0,r_0)$
\begin{align*}
	\theta_-(r)&=\frac{1}{2}A_\alpha^2D(\alpha)^2K_\alpha^2\exp\left(-\frac{A_\alpha}{r}\right)\left(1+\mathcal{O}\left\{r^{-1}\exp\left(-\frac{A_\alpha}{2r}\right)\right\}\right), \\
	\theta_+(r)&=\frac{1}{2}A_\beta^2D(\beta)^2K_\beta^2\exp\left(-\frac{A_\beta}{r}\right)\left(1+\mathcal{O}\left\{r^{-1}\exp\left(-\frac{A_\beta}{2r}\right)\right\}\right),\\
	\delta_-(r)&=K_\alpha\exp\left(-\frac{A_\alpha}{2r}\right)\left(1+\mathcal{O}\left\{r^{-1}\exp\left(-\frac{A_\alpha}{2r}\right)\right\}\right), \\
	\delta_+(r)&=K_\beta\exp\left(-\frac{A_\beta}{2r}\right)\left(1+\mathcal{O}\left\{r^{-1}\exp\left(-\frac{A_\beta}{2r}\right)\right\}\right).
\end{align*}
where 
$$A_\gamma:=\frac{\sqrt{G''(\gamma)}}{D(\gamma)}, \qquad \qquad \mbox{ for } \quad \gamma=\alpha,\beta,$$
$$K_\alpha:=(\beta-\alpha)\exp\left\{\int_{\alpha}^{\frac{\alpha+\beta}{2}}\left(\frac{A_\alpha D(t)}{\sqrt{2G(t)}}-\frac{1}{t-\alpha}\right)\,dt\right\},$$
and
$$K_\beta:=(\beta-\alpha)\exp\left\{\int_{\frac{\alpha+\beta}{2}}^{\beta}\left(\frac{A_\beta D(t)}{\sqrt{2G(t)}}-\frac{1}{\beta-t}\right)\,dt\right\}.$$
\end{thm}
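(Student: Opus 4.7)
My plan is to analyze the implicit equation \eqref{eq:int-l-r}, which gives a bijection between $\delta=\delta_-(r)$ and $r$ for $r$ small, by extracting the singular behavior of the integral as $\delta\to 0^+$ and then inverting to solve for $\delta$ in terms of $r$. Writing $\varphi(0,\ell,-1)=\alpha+\delta$, \eqref{eq:int-l-r} reads $J(\delta)=r^{-1}$, where
$$J(\delta):=\sqrt{2}\int_{\alpha+\delta}^{(\alpha+\beta)/2}\frac{D(s)}{\sqrt{G(s)-G(\alpha+\delta)}}\,ds.$$
Near $s=\alpha$ the Taylor expansion $G(s)=\tfrac12 G''(\alpha)(s-\alpha)^2+O((s-\alpha)^3)$ gives $\sqrt{2G(s)}\sim A_\alpha D(\alpha)(s-\alpha)$, so both the integrand and its formal $\delta\to 0$ limit are singular like $(s-\alpha)^{-1}$, producing a logarithmic divergence whose coefficient controls the leading asymptotic of $\delta(r)$. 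The key idea is to subtract an explicit ``hyperbolic model'' whose antiderivative is known in closed form.

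The core decomposition is
$$J(\delta)=\underbrace{\int_{\alpha+\delta}^{(\alpha+\beta)/2}\left[\frac{\sqrt{2}\,D(s)}{\sqrt{G(s)-G(\alpha+\delta)}}-\frac{2/A_\alpha}{\sqrt{(s-\alpha)^2-\delta^2}}\right]ds}_{=:M(\delta)}+\frac{2}{A_\alpha}\,\mathrm{arccosh}\!\left(\frac{\beta-\alpha}{2\delta}\right).$$
The comparison integrand is chosen so that both its endpoint singularity at $s=\alpha+\delta$ and its scale near $s=\alpha$ match those of the original, making the bracketed term bounded uniformly in $\delta$. By dominated convergence one obtains
$$M(\delta)\xrightarrow[\delta\to 0^+]{}L_\alpha:=\frac{2}{A_\alpha}\int_{\alpha}^{(\alpha+\beta)/2}\left[\frac{A_\alpha D(s)}{\sqrt{2G(s)}}-\frac{1}{s-\alpha}\right]ds,$$
so that $K_\alpha=(\beta-\alpha)\exp(A_\alpha L_\alpha/2)$ emerges as a natural invariant of the problem. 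The hyperbolic piece is explicit: $\mathrm{arccosh}(x)=\ln(2x)+O(x^{-2})$ for large $x$, hence setting $x=(\beta-\alpha)/(2\delta)$ gives the leading logarithm $\tfrac{2}{A_\alpha}\ln\tfrac{\beta-\alpha}{\delta}+O(\delta^2)$.

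Combining these facts and rearranging $J(\delta)=r^{-1}$ yields
$$\ln\frac{\beta-\alpha}{\delta}=\frac{A_\alpha}{2r}-\frac{A_\alpha L_\alpha}{2}-\frac{A_\alpha}{2}\bigl(M(\delta)-L_\alpha\bigr)+O(\delta^2),$$
and exponentiating gives $\delta_-(r)=K_\alpha\exp(-A_\alpha/(2r))\bigl(1+\text{error}\bigr)$; bounding the error by the sum of $|M(\delta)-L_\alpha|$ and an $O(\delta^2)$ term, and using that $\delta\sim K_\alpha\exp(-A_\alpha/(2r))$ at leading order, yields the claimed $O(r^{-1}\exp(-A_\alpha/(2r)))$ remainder after absorbing harmless powers of $r^{-1}$. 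Finally, $\theta_-(r)=G(\alpha+\delta_-(r))=\tfrac12 G''(\alpha)\delta_-(r)^2+O(\delta_-(r)^3)$ together with the identity $G''(\alpha)=A_\alpha^2 D(\alpha)^2$ directly yields the stated expansion of $\theta_-$. The $+$ case is completely analogous, with $\alpha$ replaced by $\beta$ and the analysis carried out on $[(\alpha+\beta)/2,\beta-\delta_+]$. The main technical obstacle is quantitatively bounding $M(\delta)-L_\alpha$: this requires Taylor expansions of $G$ and $D$ near $\alpha$ with explicit remainders, together with simultaneous control of the integrability at both endpoints $s=\alpha+\delta$ and $s=\alpha$, in order to extract a genuine $O(\delta)$ (or $O(\delta/r)$) bound uniform in $\delta$.
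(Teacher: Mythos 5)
Your strategy is the same one the paper follows -- invert the implicit integral equation \eqref{eq:int-l-r} by splitting off an explicitly integrable model of the logarithmic singularity, identify $K_\alpha$ as the exponential of the convergent regularized integral, and then read off $\theta_-$ from $\theta_-=G(\alpha+\delta_-)$ -- and your identification of the constants $A_\alpha$, $L_\alpha$, $K_\alpha$ is correct. The technical execution differs in two respects. First, you center the comparison at $\alpha$ and use $\bigl((s-\alpha)^2-\delta^2\bigr)^{-1/2}$ with its $\mathrm{arccosh}$ antiderivative, whereas the paper centers at $z_-=\alpha+\delta_-$, introduces the auxiliary parameter $\lambda=G'(z_-)/G''(z_-)$ (with $\delta_-=\lambda+\mathcal{O}(\lambda^2)$), and compares against $(t^2+2\lambda t)^{-1/2}$; these are equivalent devices. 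Second, and more importantly, where you invoke dominated convergence to get $M(\delta)\to L_\alpha$, the paper invokes \cite[Lemma 7.1]{Carr-Pego} to obtain the full expansion $I(\lambda)=(-\ln\lambda)(1+c_1\lambda+\cdots+c_n\lambda^n)+J(\lambda)$ with $J\in C^n$, and then solves for $\lambda$ via the implicit function theorem in the variable $R$ defined by $\lambda=\exp(-A_\alpha/(2r)+R)$.

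This second point is where your proposal has a real gap, which you yourself flag: dominated convergence gives $M(\delta)-L_\alpha=o(1)$ but no rate, and the theorem's conclusion is precisely a rate, namely a relative error $\mathcal{O}\{r^{-1}\exp(-A_\alpha/(2r))\}$. To obtain it you must show $M(\delta)-L_\alpha=\mathcal{O}(\delta\ln(1/\delta))$ (the cubic term in the Taylor expansion of $G$ at $\alpha$ genuinely produces a $\delta\ln(1/\delta)$ contribution, which is why the remainder carries the factor $r^{-1}$), and you must also close the self-referential loop: the error in the equation for $\ln\bigl((\beta-\alpha)/\delta\bigr)$ depends on the unknown $\delta$, so one needs an a priori bound $\delta\leq C\exp(-c/r)$ before substituting the leading-order value back in. The paper's implicit-function-theorem step, with the estimate $\partial_r^m S=\mathcal{O}\{r^{-(2m+1)}\exp(-A_\alpha/(2r))\}$, handles both issues simultaneously. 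So your proposal is a faithful sketch of the paper's argument with the correct constants, but the one genuinely delicate step -- the quantitative remainder -- is deferred rather than carried out, and without it the stated error bounds are not established.
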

\begin{proof}
Let us prove the result only for $\theta_-$ and $\delta_-$; the proof for $\theta_+$ and $\delta_+$ is similar and therefore omitted.
Following \cite{Carr-Pego} and using Proposition \ref{prop:ell}, we can claim that the function $\varphi(\cdot,\ell,-1)$ is well defined and \eqref{eq:int-l-r} reads as
\begin{equation}\label{eq:r-z-}
	r^{-1}=\sqrt2\int_{z_-}^{\frac{\alpha+\beta}{2}}\frac{D(s)}{\sqrt{G(s)-\theta_-}}\,ds,
\end{equation}
where $G$ satisfies \eqref{eq:ass-G}, $\theta_-:=\theta_-(r)$ and $z_-:=z_-(r)=\alpha+\delta_-(r)$.
For small $t$, we can use the expansions 
\begin{align*}
	D(z_-+t)&=D(z_-)+\mathcal{O}(t),\\
	G(z_-+t)&=\theta_-+G''(z_-)\lambda t+\frac12G''(z_-)t^2+\mathcal{O}(t^3),
\end{align*}
where $\lambda:=G'(z_-)/G''(z_-)$.
Since $G'(\alpha)=0$ and $G''(\alpha)>0$ (see \eqref{eq:ass-G}), we can express $z_-$ as a smooth function of $\lambda$ for $z_-$ close to $\alpha$,
with $z_--\alpha=\lambda+\mathcal{O}(\lambda^2)$.
Hence, we deduce
\begin{align}
	\delta_-(r)&=\lambda+\mathcal{O}(\lambda^2), \label{eq:delta-lambda}\\
	\theta_-(r)&=G(\alpha+\delta_-(r))=G(\alpha+\lambda+\mathcal{O}(\lambda^2)). \label{eq:theta-lambda}
\end{align}
Moreover, \eqref{eq:r-z-} takes the form
\begin{equation}\label{eq:r-z-2}
	\frac{\sqrt{G''(z_-)}}{2D(z_-)r}=\int_{0}^{-z_-+\frac{\alpha+\beta}{2}}\frac{1+t\mu_1(t)}{\sqrt{t^2+2\lambda t+t^3\mu(t,\lambda)}}\,dt=:I(\lambda),
\end{equation}
where $\mu_1,\mu_2$ are smooth functions.
From \cite[Lemma 7.1]{Carr-Pego}, it follows that there exists $\lambda_0>0$ such that for $\lambda\in(0,\lambda_0)$ one has
$$I(\lambda)=(-\ln\lambda)(1+c_1\lambda+\dots c_n\lambda^n)+J(\lambda),$$
where $c_j$ are constants and $J$ is a $C^n$ function in $[0,\lambda_0]$.
As a consequence, \eqref{eq:r-z-2} becomes
\begin{equation}\label{eq:r-z-3}
	\frac{A_\alpha}{2r}=(-\ln\lambda)(1+d_1\lambda+\cdots + d_n\lambda^n)+J_1(\lambda),
\end{equation}
where $d_j$ are constants and $J_1$ is a $C^n$ function in $[0,\lambda_0]$.
Let us introduce the new parameter $R$ defined by
$$\lambda=\exp\left(-\frac{A_\alpha}{2r}+R\right),$$
so that we can rewrite equation \eqref{eq:r-z-3} as
$$R-J_1(0)-S(R,r)=0,$$
where
$$\frac{\partial^m S}{\partial r^m}=\mathcal{O}\left\{r^{-(2m+1)}\exp\left(-\frac{A_\alpha}{2r}\right)\right\}, \qquad \qquad m=0,1,\dots,n.$$
By using the implicit function theorem to solve the equation for $R$ and $r$, we deduce that there exists a $C^n$ function $R:=R(r)$ defined on $[0,r_0)$
given by 
$$R(r)=J_1(0)+\mathcal{O}\left\{r^{-1}\exp\left(-\frac{A_\alpha}{2r}\right)\right\}.$$
As a consequence, we end up with
\begin{equation}\label{eq:lambda-final}
	\lambda=\exp\left(-\frac{A_\alpha}{2r}\right)\left(K_\alpha+\mathcal{O}\left\{r^{-1}\exp\left(-\frac{A_\alpha}{2r}\right)\right\}\right),
\end{equation}
where $K_\alpha:=\exp\{J_1(0)\}$.
It remains to compute $K_\alpha$ and we have the formula for $\theta_-$: let us rewrite \eqref{eq:r-z-} as
\begin{equation*}
	\begin{aligned}
		\frac{\sqrt{G''(z_-)}}{2D(z_-)r}&=
		\int_{0}^{-z_-+\frac{\alpha+\beta}{2}}\left(\frac{D(z_-+t)\sqrt{G''(z_-)}}{\sqrt2D(z_-)\sqrt{G(z_-+t)-\theta_-}}-\frac{1}{\sqrt{t^2+2\lambda t}}\right)\,dt\\
		&\qquad \qquad +\int_{0}^{-z_-+\frac{\alpha+\beta}{2}}\frac{1}{\sqrt{t^2+2\lambda t}}\,dt\\
		&=M_1(r)+M_2(r).
	\end{aligned}
\end{equation*}
The second integral can be explicitly computed and it is equal to
$$M_2(r)=\ln\left(-z_-+\frac{\alpha+\beta}{2}+\lambda+\sqrt{\left(-z_-+\frac{\alpha+\beta}{2}\right)^2+\lambda(-2z_-+\alpha+\beta)}\right)-\ln\lambda.$$
Since
$$\frac{\sqrt{G''(z_-)}}{2D(z_-)r}+\ln\lambda=M_1(r)+M_2(r)+\ln\lambda,$$
and as $r\to0^+$, one has $z_-\to\alpha$, $\lambda\to0$, $\theta_-\to0$, passing to the limit we conclude that
$$J_1(0)=\int_{0}^{\frac{\beta-\alpha}{2}}\left(\frac{A_\alpha D(\alpha+t)}{\sqrt{2G(\alpha+t)}}-\frac{1}{t}\right)\,dt+\ln(\beta-\alpha).$$
Therefore,
$$K_\alpha=\exp\left\{J_1(0)\right\}=(\beta-\alpha)\exp\left\{\int_{0}^{\frac{\beta-\alpha}{2}}\left(\frac{A_\alpha D(\alpha+t)}{\sqrt{2G(\alpha+t)}}-\frac{1}{t}\right)\,dt\right\}.$$
Substituting \eqref{eq:lambda-final} in \eqref{eq:delta-lambda}, we obtain the expansion for $\delta_-$, namely
$$\delta_-(r)=K_\alpha\exp\left(-\frac{A_\alpha}{2r}\right)\left(1+\mathcal{O}\left\{r^{-1}\exp\left(-\frac{A_\alpha}{2r}\right)\right\}\right).$$
Finally, from \eqref{eq:ass-G} and \eqref{eq:theta-lambda} it follows that
$$\theta_-(r)=\frac{1}{2}G''(\alpha)K_\alpha^2\exp\left(-\frac{A_\alpha}{r}\right)\left(1+\mathcal{O}\left\{r^{-1}\exp\left(-\frac{A_\alpha}{2r}\right)\right\}\right),$$
and the proof is complete.
\end{proof}
The analysis contained in this paper relies fundamentally on the expansions for $\theta_\pm$ and $\delta_\pm$ in Theorem \ref{thm:teta-lead} and they will be used frequently. 
Another result that we use in Section \ref{sec:layer} concerns the behavior of the integral of the functions $D(\varphi)\varphi_x^2$ in the intervals $[-\ell/2,0]$ and $[0,\ell/2]$, respectively.
\begin{lem}
Under the assumptions of Proposition \ref{prop:ell}, it follows that
\begin{equation}\label{eq:int-Dphi'}
	\begin{aligned}
		\mathcal{I}^D(r)&:=\mathcal{I}^D_-(r)+\mathcal{I}^D_+(r)=\\
		&=\int_{-\ell/2}^0D(\varphi(x,\ell,-1))\varphi_x(x,\ell,-1)^2\,dx+\int_0^{\ell/2}D(\varphi(x,\ell,+1))\varphi_x(x,\ell,+1)^2\,dx\\
		&=\e^{-1}S_G+E(r),
	\end{aligned}
\end{equation}
where $r=\e/\ell$,
\begin{equation}\label{eq:S_G}
	S_G:=\int_\alpha^\beta\sqrt{2G(u)}\,du,
\end{equation}
and $|E|\leq C\e^{-1}\max\left\{\delta_+(r),\delta_-(r)\right\}$, for some $C>0$ independent on $\e$.
\end{lem}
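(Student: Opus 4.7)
The plan is to reduce the integrals $\mathcal{I}^D_\pm$ to explicit one-variable integrals over $u=\varphi$ via the first integral \eqref{eq:phipm-first}, and then to compare the resulting expression with $S_G$ by splitting the interval $[\alpha,\beta]$ into four adjacent pieces.

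First, I would exploit the $x\mapsto -x$ symmetry of the boundary value problem \eqref{eq:per-l}: the profiles $\varphi(\cdot,\ell,\pm 1)$ are even, so their unique extrema are attained at $x=0$, where $\varphi(0,\ell,-1) = \alpha+\delta_-(r)$ is a minimum and $\varphi(0,\ell,+1) = \beta - \delta_+(r)$ is a maximum. Evaluating \eqref{eq:phipm-first} at $x=0$ identifies the integration constant as $\theta_\pm(r) = G(\varphi(0,\ell,\pm 1))$ and yields
$$\e D(\varphi)\varphi_x = -\sqrt{2[G(\varphi) - \theta_\pm(r)]}$$
on the corresponding interval, since $\varphi_x < 0$ on both $[-\ell/2, 0]$ (for the $-1$ branch) and $[0,\ell/2]$ (for the $+1$ branch). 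Writing $D(\varphi)\varphi_x^2\,dx = \varphi_x\cdot D(\varphi)\varphi_x\,dx$ and substituting $u=\varphi(x)$, valid by strict monotonicity, produces
$$\mathcal{I}^D_-(r) = \frac{1}{\e}\int_{\alpha+\delta_-}^{(\alpha+\beta)/2}\!\sqrt{2[G(u) - \theta_-(r)]}\,du, \qquad \mathcal{I}^D_+(r) = \frac{1}{\e}\int_{(\alpha+\beta)/2}^{\beta-\delta_+}\!\sqrt{2[G(u) - \theta_+(r)]}\,du.$$

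Next, I would split $S_G = \int_\alpha^\beta\sqrt{2G(u)}\,du$ into four integrals on the adjacent intervals $[\alpha,\alpha+\delta_-]$, $[\alpha+\delta_-,(\alpha+\beta)/2]$, $[(\alpha+\beta)/2,\beta-\delta_+]$, $[\beta-\delta_+,\beta]$, so that $\e[\e^{-1}S_G - \mathcal{I}^D(r)]$ becomes the sum of the boundary-layer pieces
$$B_- := \int_\alpha^{\alpha+\delta_-}\!\sqrt{2G(u)}\,du, \qquad B_+ := \int_{\beta-\delta_+}^\beta\!\sqrt{2G(u)}\,du,$$
and the interior differences
$$M_\pm := \int \Bigl[\sqrt{2G(u)} - \sqrt{2[G(u) - \theta_\pm(r)]}\Bigr]\,du$$
taken over the two middle intervals. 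By \eqref{eq:ass-G}, $G(u)\leq C(u-\alpha)^2$ near $\alpha$ and $G(u) \leq C(\beta-u)^2$ near $\beta$, giving $B_\pm \leq C\delta_\pm^2$. For the interior pieces, the elementary identity $\sqrt{a}-\sqrt{a-b} = b/(\sqrt{a}+\sqrt{a-b}) \leq b/\sqrt{a}$ with $a = 2G(u)$ and $b = 2\theta_\pm$ yields
$$M_- \leq 2\theta_-(r)\!\int_{\alpha+\delta_-}^{(\alpha+\beta)/2}\!\frac{du}{\sqrt{2G(u)}}, \qquad M_+ \leq 2\theta_+(r)\!\int_{(\alpha+\beta)/2}^{\beta-\delta_+}\!\frac{du}{\sqrt{2G(u)}};$$
since $\sqrt{2G(u)} \sim \sqrt{G''(\alpha)}(u-\alpha)$ as $u\to\alpha^+$ and $\sqrt{2G(u)}\sim\sqrt{G''(\beta)}(\beta-u)$ as $u\to\beta^-$, these integrals are of order $|\ln\delta_\pm|$, hence $M_\pm \leq C\theta_\pm|\ln\delta_\pm|$.

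Combining the four estimates and invoking Theorem \ref{thm:teta-lead}, which gives $\theta_\pm(r) \sim \tfrac{1}{2}G''(\gamma)\delta_\pm(r)^2$ and $\delta_\pm(r) \to 0^+$ as $r\to 0^+$, I obtain $\delta_\pm^2 + \theta_\pm|\ln\delta_\pm| = o(\delta_\pm)$, and therefore
$$|S_G - \e\,\mathcal{I}^D(r)| \leq B_- + B_+ + M_- + M_+ \leq C\max\{\delta_-(r),\delta_+(r)\},$$
which gives exactly the asserted bound on $E(r) = \mathcal{I}^D(r) - \e^{-1}S_G$. The principal delicate point is the estimate of $M_\pm$: the integrand is large precisely where $G$ approaches its shifted minimum $\theta_\pm$, generating a logarithmic divergence as $\delta_\pm \to 0^+$; it is the quadratic scaling $\theta_\pm \sim \delta_\pm^2$ from Theorem \ref{thm:teta-lead} that absorbs the logarithm and delivers the required linear-in-$\max\{\delta_+,\delta_-\}$ bound.
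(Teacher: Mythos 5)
Your proposal is correct and follows essentially the same route as the paper: both reduce $\mathcal{I}^D_\pm$ to $u$-integrals of $\sqrt{2[G(u)-\theta_\pm]}$ via the first integral \eqref{eq:phipm-first}, compare with $S_G$ by adding and subtracting the boundary-layer pieces near $\alpha,\beta$ and the interior difference $\sqrt{2G}-\sqrt{2(G-\theta_\pm)}$, and close the estimate using Theorem \ref{thm:teta-lead}. The only (immaterial) difference is in the intermediate bounds: the paper majorizes the interior term by $C\sqrt{\theta_\pm}$ directly, whereas you keep the logarithm $\theta_\pm|\ln\delta_\pm|$ and absorb it afterwards via $\theta_\pm\sim\tfrac12 G''(\gamma)\delta_\pm^2$ — both yield the stated $|E|\leq C\e^{-1}\max\{\delta_+,\delta_-\}$.
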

\begin{proof}
Let us consider the first integral in \eqref{eq:int-Dphi'}; by taking advantage of \eqref{eq:phipm-first} and the fact that $\varphi_x(x,\ell,-1)\leq0$, for any $x\in[-\ell/2,0]$, we obtain 
$$\e D(\varphi(x,\ell,-1))\varphi_x(x,\ell,-1)=-\sqrt{2[G(\varphi(x,\ell,-1))-\theta_-(r)]}.$$
Hence,
\begin{align*}
	\e\mathcal{I}^D_-(r)&=\int_{\varphi(0,\ell,-1)}^{\frac{\alpha+\beta}{2}}\sqrt{2[G(u)-\theta_-(r)]}\,du\\
	&=\int_\alpha^{\frac{\alpha+\beta}{2}}\sqrt{2G(u)}\,du-\int_\alpha^{\varphi(0,\ell,-1)}\sqrt{2G(u)}\,du\\
	&\qquad+\int_{\varphi(0,\ell,-1)}^{\frac{\alpha+\beta}{2}}\left\{\sqrt{2[G(u)-\theta_-(r)]}-\sqrt{2G(u)}\right\}\,du\\
	&=\int_\alpha^{\frac{\alpha+\beta}{2}}\sqrt{2G(u)}\,du-\int_\alpha^{\varphi(0,\ell,-1)}\sqrt{2G(u)}\,du\\
	&\qquad-\sqrt2\int_{\varphi(0,\ell,-1)}^{\frac{\alpha+\beta}{2}}\frac{\theta_-}{\sqrt{G(u)}+\sqrt{G(u)-\theta_-(r)}}\,du,
\end{align*}
and, as a consequence, one has
$$\mathcal{I}^D_-(r)=\e^{-1}\int_\alpha^{\frac{\alpha+\beta}{2}}\sqrt{2G(u)}\,du+\e^{-1}E_-(r),$$
where
\begin{equation*}
		E_-(r):=-\int_\alpha^{\varphi(0,\ell,-1)}\sqrt{2G(u)}\,du-
		\sqrt2\int_{\varphi(0,\ell,-1)}^{\frac{\alpha+\beta}{2}}\frac{\theta_-(r)}{\sqrt{G(u)}+\sqrt{G(u)-\theta_-(r)}}\,du.
\end{equation*}
Similarly, for the second integral in \eqref{eq:int-Dphi'}, we deduce 
$$\mathcal{I}^D_+(r)=\e^{-1}\int_{\frac{\alpha+\beta}{2}}^\beta\sqrt{2G(u)}\,du+\e^{-1}E_+(r),$$
where
\begin{equation*}
	E_+(r):=-\int_{\varphi(0,\ell,+1)}^\beta\sqrt{2G(u)}\,du
		-\sqrt2\int_{\frac{\alpha+\beta}{2}}^{\varphi(0,\ell,+1)}\frac{\theta_+(r)}{\sqrt{G(u)}+\sqrt{G(u)-\theta_+(r)}}\,du.
\end{equation*}
By summing up, we obtain
$$\mathcal{I}^D_-(r)=\e^{-1}S_G+E(r),$$
where $S_G$ is defined in \eqref{eq:S_G} and $E(r)=\e^{-1}E_-(r)+\e^{-1}E_+(r)$.
From Theorem \ref{thm:teta-lead}, it follows that
\begin{align*}
	|E_-(r)|&\leq C\delta_-(r)\sqrt{\theta_-(r)}+C\sqrt{\theta_-(r)}\leq C\delta_-(r),\\
	|E_+(r)|&\leq C\delta_+(r)\sqrt{\theta_+(r)}+C\sqrt{\theta_+(r)}\leq C\delta_+(r),
\end{align*}
and the proof is complete.
\end{proof}
Notice that Theorem \ref{thm:teta-lead} implies
\begin{equation}\label{eq:E-int}
	|E(r)|\leq C\exp\left(-\frac{A}{r}\right), \quad \mbox{ for } r\in(0,r_0), \qquad \mbox{ with } A:=\min\{A_\alpha,A_\beta\}.
\end{equation}
We continue this section by establishing estimates for the derivative of the functions $\varphi(x,\ell,\pm1)$ with respect to $\ell$.
To start with we derive the following formula.
\begin{lem}\label{lem:varphi_l}
Under the assumptions of Proposition \ref{prop:ell}, it follows that 
\begin{equation*}
	2\varphi_\ell(x,\ell,\pm1)=-(\textrm{\emph{sign }} x)\varphi_x(x,\ell,\pm1)+2w(x,\ell,\pm1),
\end{equation*}
for $x\in[-\ell,\ell]$, where 
\begin{equation*}
	w(x,\ell,\pm1):=\e^{-1}\ell^{-2}\theta_\pm'(r)\varphi_x(|x|,\ell,\pm1)\int_{\ell/2}^{|x|}\frac{ds}{\left[D(\varphi(s,\ell,\pm1))\varphi_x(s,\ell,\pm1)\right]^2},
\end{equation*}
for $x\neq0$ and 
\begin{equation*}
	w(0,\ell,\pm1):=-\e^{-1}\ell^{-2}\theta_\pm'(r)\left[\varphi_{xx}(0,\ell,\pm1)D(\varphi(0,\ell,\pm1))^2\right]^{-1},
\end{equation*}
where $r=\e/\ell$ and $\theta_\pm'(r)=\frac{d}{dr}\theta_\pm(r)$.
\end{lem}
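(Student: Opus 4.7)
The plan is to combine the evenness of $\varphi(\cdot,\ell,\pm 1)$ with a reduction-of-order argument applied to the linearization of \eqref{eq:per-l} about $\varphi$. I carry out the argument in detail on $x\in(0,\ell/2]$; the extension to $x<0$ is immediate by parity, and the two cases $\pm 1$ are entirely analogous.

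\smallskip

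\textbf{Symmetry.} Because the ODE in \eqref{eq:per-l} is autonomous and its boundary conditions are symmetric about $x=0$, the profile $\varphi(\cdot,\ell,\pm 1)$ is even in $x$. Consequently $\varphi_x$ is odd, $\varphi_\ell$ is even, and in particular $\varphi_x(0,\ell,\pm 1)=0$. The factor $-(\textrm{sign}\,x)$ in the claim and the $|x|$ in the definition of $w$ encode precisely this symmetry, so it suffices to establish the identity for $x>0$.

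\smallskip

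\textbf{Linearization and reduction of order.} Differentiating the ODE in \eqref{eq:per-l} with respect to $x$, and then with respect to $\ell$, shows that both $\varphi_x$ and $\varphi_\ell$ solve the linearized equation
\[
\mathcal{L}v:=\e^2[D(\varphi)v]_{xx}-f'(\varphi)v=0.
\]
Setting $W:=D(\varphi)v$ recasts this as the scalar second-order linear ODE $\e^2 W_{xx}=(f'(\varphi)/D(\varphi))W$, for which $W_1:=D(\varphi)\varphi_x$ is a known solution. A standard reduction-of-order step (the Wronskian $W W_1'-W_1 W'$ is constant because there is no first-order term) yields the general solution on $(0,\ell/2]$ in the form
\[
\varphi_\ell(x)=c_1\varphi_x(x)+c_2\varphi_x(x)\int_{\ell/2}^{x}\frac{ds}{[D(\varphi(s))\varphi_x(s)]^2},
\]
with the lower integration limit fixed at $\ell/2$ for later convenience.

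\smallskip

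\textbf{Fixing the constants.} Differentiating $\varphi(\ell/2,\ell,\pm 1)=(\alpha+\beta)/2$ in $\ell$ gives $\varphi_\ell(\ell/2)=-\tfrac12\varphi_x(\ell/2)$; evaluating the reduction-of-order formula at $x=\ell/2$ (where the integral vanishes) forces $c_1=-\tfrac12$. To identify $c_2$, I compute $\varphi_\ell(0)$ in two different ways. First, from the definitions $\varphi(0,\ell,-1)=\alpha+\delta_-(r)$ and $\varphi(0,\ell,+1)=\beta-\delta_+(r)$ with $r=\e/\ell$, the chain rule yields $\varphi_\ell(0,\ell,\pm 1)=\mp\e\ell^{-2}\delta_\pm'(r)$. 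Second, since $\varphi_x(0)=0$ and $\varphi_x(x)\sim\varphi_{xx}(0)\,x$ as $x\to 0^+$, a short Taylor-expansion computation gives
\[
\lim_{x\to 0^+}\varphi_x(x)\int_{\ell/2}^{x}\frac{ds}{[D(\varphi)\varphi_x]^2}=-\frac{1}{D(\varphi(0))^2\varphi_{xx}(0)}.
\]
Matching the two expressions for $\varphi_\ell(0)$, using the ODE \eqref{eq:per-l} at $x=0$ in the form $\e^2 D(\varphi(0))\varphi_{xx}(0)=f(\varphi(0))$, and invoking the identity $\theta_\pm'(r)=G'(\varphi(0))\frac{d\varphi(0,\ell,\pm 1)}{dr}$ obtained from $\theta_\pm(r)=G(\varphi(0,\ell,\pm 1))$ together with $G'(u)=f(u)D(u)$, the algebra yields $c_2=\e^{-1}\ell^{-2}\theta_\pm'(r)$. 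This delivers the formula for $w(x,\ell,\pm 1)$ on $x>0$, while the displayed limit simultaneously produces the stated value $w(0,\ell,\pm 1)=-\e^{-1}\ell^{-2}\theta_\pm'(r)[\varphi_{xx}(0)D(\varphi(0))^2]^{-1}$.

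\smallskip

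\textbf{Main obstacle.} The only real subtlety is the behavior at $x=0$: the factor $\varphi_x(x)$ vanishes while the integrand $[D(\varphi)\varphi_x]^{-2}$ diverges like $s^{-2}$, so the integral itself blows up like $|x|^{-1}$ as $x\to 0^+$; nevertheless their product has a finite limit, and it is precisely this singular cancellation that pins down $c_2$. Once the identification is made via the identities linking $\theta_\pm'$, $\delta_\pm'$, and the ODE data at the critical point, both the expression for $w$ away from the origin and its limiting value at $x=0$ fall out simultaneously.
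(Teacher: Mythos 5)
Your argument is correct and reaches the stated formulas, but it identifies the key coefficient by a genuinely different mechanism than the paper. The paper differentiates the first integral \eqref{eq:phipm-first} with respect to $\ell$; after substituting the ODE this collapses in one line to
\begin{equation*}
\frac{d}{dx}\left(\frac{\varphi_\ell}{\varphi_x}\right)=\frac{\theta_\pm'(r)}{\e\ell^2D^2(\varphi)\varphi_x^2},
\end{equation*}
so the weighted Wronskian of $\varphi_\ell$ and $\varphi_x$ is constant \emph{and already equal to} $\e^{-1}\ell^{-2}\theta_\pm'(r)$; a single integration from $\ell/2$ plus the boundary condition then gives everything, and the value $w(0,\ell,\pm1)$ is only computed afterwards by L'H\^opital. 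You instead derive the homogeneous structure from the linearized second-order equation (both $\varphi_x$ and $\varphi_\ell$ solve $\e^2[D(\varphi)v]_{xx}=f'(\varphi)v$), write the general solution by reduction of order, and must then determine $c_2$ by a separate evaluation at the critical point $x=0$, chaining together the L'H\^opital limit, $\theta_\pm'(r)=G'(\varphi(0))\,\partial_r\varphi(0,\ell,\pm1)$, $G'=fD$, and the ODE at $x=0$ in the form $\e^2D(\varphi(0))\varphi_{xx}(0)=f(\varphi(0))$. Both routes are sound; the paper's is more economical because the first integral hands you the constant for free, while yours makes transparent why $\varphi_\ell(0)=w(0)$ and why the singular cancellation at $x=0$ is exactly what fixes $c_2$. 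One small slip: the chain rule gives $\varphi_\ell(0,\ell,\pm1)=\pm\e\ell^{-2}\delta_\pm'(r)$ (for the $+1$ case $\varphi(0)=\beta-\delta_+(r)$ and $dr/d\ell=-\e\ell^{-2}$, so the two minus signs cancel), not $\mp\e\ell^{-2}\delta_\pm'(r)$ as you wrote; this is harmless here because your actual matching runs through $\theta_\pm'$ and lands on the correct $c_2=\e^{-1}\ell^{-2}\theta_\pm'(r)$, but carried literally the $\mp$ would flip the sign of $c_2$.
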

\begin{proof}
Consider $x>0$ and differentiate equation \eqref{eq:phipm-first} with respect to $\ell$ to obtain
\begin{equation*}
	\e^2 D(\varphi)\varphi_x \left[D'(\varphi)\varphi_\ell\varphi_x+D(\varphi)\varphi_{x\ell}\right] = f(\varphi)D(\varphi)\varphi_\ell-\frac{d}{d\ell}\theta_\pm(r),
\end{equation*}
where we used that $G'(\varphi)=f(\varphi)D(\varphi)$.
Using equation \eqref{eq:stat} and the definition $r=\e/\ell$, we infer
$$f(\varphi)D(\varphi)=\e^2D'(\varphi)\varphi_x^2D(\varphi)+\e^2D^2(\varphi)\varphi_{xx}, \qquad \qquad \frac{d}{d\ell}\theta_\pm(r)=-\theta_\pm'(r)\e\ell^{-2}.$$
Therefore,
\begin{equation*}
	\e^2D^2(\varphi)\left[\varphi_x\varphi_{x\ell}-\varphi_{xx}\varphi_\ell\right]=\theta_\pm'(r)\e\ell^{-2}.
\end{equation*}
We can rewrite this as
\begin{equation}\label{eq:to-phi_l}
	\frac{d}{dx}\left(\frac{\varphi_l}{\varphi_x}\right)=\frac{\theta_\pm'(r)}{\e\ell^2D^2(\varphi)\varphi_x^2}.
\end{equation}
Integrating \eqref{eq:to-phi_l} in $(\ell/2,x)$, we end up with
$$\frac{\varphi_\ell(x,\ell,\pm1)}{\varphi_x(x,\ell,\pm1)}-\frac{\varphi_\ell(\ell/2,\ell,\pm1)}{\varphi_x(\ell/2,\ell,\pm1)}=
\e^{-1}\ell^{-2}\theta_\pm'(r)\int_{\ell/2}^{x}\frac{ds}{\left[D(\varphi(s,\ell,\pm1))\varphi_x(s,\ell,\pm1)\right]^2}.$$
In order to compute $\varphi_\ell(l/2,\ell,\pm1)$, let us differentiate the identity 
$$\varphi(\ell/2,\ell,\pm1)=\frac{\alpha+\beta}{2}$$
with respect to $\ell$. 
The result is
$$\frac12\varphi_x(\ell/2,\ell,\pm1)+\varphi_\ell(\ell/2,\ell,\pm1)=0,$$
which implies
$$\frac{\varphi_\ell(\ell/2,\ell,\pm1)}{\varphi_x(\ell/2,\ell,\pm1)}=-\frac12,$$
and we obtain the formula for $\varphi_\ell$, when $x>0$.
To obtain the formula for $x<0$ we can either repeat the above procedure using the boundary condition  
$$\varphi(-\ell/2,\ell,\pm1)=\frac{\alpha+\beta}{2},$$
or use the fact that $\varphi_\ell(x)=\varphi_\ell(-x)$, which holds since $\varphi$ is even in $x$.
To derive the formula for $w(0,\ell,\pm1)$ notice that 
$$\lim_{x\to0^+}\int_{\ell/2}^{x}\frac{ds}{\left[D(\varphi(s,\ell,\pm1))\varphi_x(s,\ell,\pm1)\right]^2}=
\e^2\lim_{x\to0^+}\int_{\ell/2}^{x}\frac{ds}{\sqrt{G(\varphi(x,\ell,\pm1))-\theta_\pm(r)}}=-\infty.$$
As a consequence, by using L'Hopital's rule we get
\begin{align*}
	\lim_{x\to0^+}\varphi_x(x,\ell,\pm1)\int_{\ell/2}^{x}\frac{ds}{\left[D(\varphi(s,\ell,\pm1))\varphi_x(s,\ell,\pm1)\right]^2}&=
	-\lim_{x\to0^+}\frac{1}{\varphi_{xx}(x,\ell,\pm1)D(\varphi(x,\ell,\pm1))^2}\\
	&=-\frac{1}{\varphi_{xx}(0,\ell,\pm1)D(\varphi(0,\ell,\pm1))^2},
\end{align*}
and the proof is complete.
\end{proof}
We conclude this section by obtaining an upper bound for the function $w$. 
\begin{lem}\label{lem:est-w}
Let $w(x,\ell,\pm1)$ be given by Lemma \ref{lem:varphi_l}.
Then, there exists $r_0>0$ such that for any $r\in(0,r_0)$
\begin{equation*}
	|w(x,\ell,\pm1)|\leq 
	\begin{cases}
		C\e^{-1}\delta_\pm(r), \qquad &\mbox{ for } x\in[-\ell/2-\e,\ell/2+\e],\\
		C\e^{-1}\theta_\pm(r), \qquad &\mbox{ for } x\in[-\ell/2-\e,-\ell/2+\e]\\
		&\mbox{ and for } x\in[\ell/2-\e,\ell/2+\e].
	\end{cases}
\end{equation*}
\end{lem}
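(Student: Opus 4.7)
The plan is to combine: (i) a sharp bound on the prefactor $\e^{-1}\ell^{-2}\theta_\pm'(r)$ obtained by differentiating the expansion in Theorem \ref{thm:teta-lead}; (ii) a phase-space change of variables $u=\varphi(s)$ in the integral $\int_{\ell/2}^{|x|}[D(\varphi)\varphi_x]^{-2}\,ds$, which converts it into an integral whose integrand is explicitly controlled by the potential $G-\theta_\pm$; and (iii) a two-region case analysis that extracts the correct order of the resulting product. I will carry out the argument for $\varphi(\cdot,\ell,-1)$; the case $\varphi(\cdot,\ell,+1)$ is identical with $\alpha$ replaced by $\beta$.

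For ingredient (i), differentiating the expansion of $\theta_-$ from Theorem \ref{thm:teta-lead} in $r$ yields $\theta_-'(r)=\mathcal O(r^{-2}\theta_-(r))$, hence $\e^{-1}\ell^{-2}|\theta_-'(r)|\leq C\e^{-3}\theta_-(r)$. For (ii), the first-order identity \eqref{eq:phipm-first} gives $[D(\varphi)\varphi_x]^2=2\e^{-2}(G(\varphi)-\theta_-)$ and $\varphi_x=\sqrt{2(G(\varphi)-\theta_-)}/[\e D(\varphi)]$ on $(0,\ell/2)$; substituting $u=\varphi(s)$ produces, after simplification,
\begin{equation*}
	\varphi_x(|x|)\int_{\ell/2}^{|x|}\frac{ds}{[D(\varphi)\varphi_x]^2}=-\e^2\,\frac{\sqrt{2(G(\varphi(|x|))-\theta_-)}}{D(\varphi(|x|))}\int_{\varphi(|x|)}^{(\alpha+\beta)/2}\frac{D(u)\,du}{(2(G(u)-\theta_-))^{3/2}}.
\end{equation*}
The main work is in the interior, where $\varphi(|x|)$ is close to the minimum $\alpha+\delta_-(r)$. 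Using the expansion $G(u)-\theta_-=\tfrac{1}{2}G''(\alpha)(u-\alpha-\delta_-)(u-\alpha+\delta_-)+\mathcal O((u-\alpha)^3)$ and splitting the integral at a fixed distance $\mu>0$ from $\alpha$, the singular contribution near $u=\alpha+\delta_-$ matches the vanishing factor $\sqrt{G(\varphi(|x|))-\theta_-}$ out front, producing an overall $\mathcal O(\delta_-^{-1})$; the smooth tail contributes $\mathcal O(1)$. Combined with (i) and the identity $\theta_-=\tfrac{1}{2}G''(\alpha)\delta_-^2(1+o(1))$ extracted from Theorem \ref{thm:teta-lead}, this yields the uniform bound $|w|\leq C\e^{-1}\delta_-(r)$.

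In the boundary zone $|x|\in[\ell/2-\e,\ell/2+\e]$, the value $\varphi(|x|)$ stays in a compact set separated from $\alpha$, so $G(\varphi)-\theta_-$ is bounded below; together with $\varphi_x=\mathcal O(\e^{-1})$ and an integration length of $\mathcal O(\e)$, the full product is of order $\e^2$, which combined with (i) upgrades the estimate to $|w|\leq C\e^{-1}\theta_-(r)$. At the special point $x=0$, the alternative formula involves $\varphi_{xx}(0)$: since $\varphi_x(0)=0$, equation \eqref{eq:stat} reads $\e^2 D(\varphi(0))\varphi_{xx}(0)=f(\varphi(0))$, and $f'(\alpha)=G''(\alpha)/D(\alpha)$ together with Taylor expansion of $f$ at $\alpha$ give $\varphi_{xx}(0)D(\varphi(0))^2=G''(\alpha)\delta_-(r)\e^{-2}(1+o(1))$, which again produces $|w(0)|\leq C\e^{-1}\delta_-(r)$. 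The estimates extend to $|x|\in[\ell/2,\ell/2+\e]$ via the alternating/periodic extension of $\varphi$ across $x=\pm\ell/2$. The main technical obstacle is the interior estimate: a naive $L^\infty$ bound on the singular integrand would lose the correct scaling, so one has to track the cancellation between $\sqrt{G(\varphi)-\theta_-}\to 0$ at $x=0$ and the divergence of $(G(u)-\theta_-)^{-3/2}$ at $u=\alpha+\delta_-$ in order to extract the sharp $\delta_-^{-1}$ factor.
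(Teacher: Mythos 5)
Your proposal is correct, and it reaches the stated bounds by a genuinely different route in the one place where the lemma is delicate, namely the interior region. The paper handles $x\in[0,\ell/2-\e]$ by an ODE/maximum-principle argument: it sets $\tilde w:=D(\varphi)w$, shows $\e^2\tilde w_{xx}=\bigl(f'(\varphi)/D(\varphi)\bigr)\tilde w$, and propagates the endpoint values of $\tilde w$ inward, so that the whole interior estimate reduces to the explicit computations at $x=0$ and at $x=\ell/2-\e$. You instead evaluate $\varphi_x(x)\int_{\ell/2}^{x}[D(\varphi)\varphi_x]^{-2}\,ds$ directly in the phase variable and track the cancellation between the vanishing prefactor $\sqrt{G(\varphi(x))-\theta_\pm}$ and the $(G-\theta_\pm)^{-3/2}$ singularity at the turning point $u=\alpha+\delta_-$; I checked that your two-region analysis does give the uniform $\mathcal O(\delta_-^{-1})$ bound for every position of $\varphi(x)$ between $\alpha+\delta_-$ and $(\alpha+\beta)/2$ (the borderline case $t_0:=\varphi(x)-\alpha-\delta_-\sim\delta_-$ included), and combined with $\theta_-\sim\tfrac12G''(\alpha)\delta_-^2$ this closes the interior estimate. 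The trade-off: the paper's route avoids all singular-integral bookkeeping but relies on the sign condition $f'(\varphi)>0$ on the interior range and on controlling $\tilde w$ at both endpoints; yours is purely computational and needs no sign hypothesis on $f'$, at the cost of the careful matching near the turning point, which you correctly identify as the crux. Your treatments of the prefactor $\e^{-1}\ell^{-2}\theta_\pm'(r)=\mathcal O(\e^{-3}\theta_\pm)$ and of the boundary zone $[\ell/2-\e,\ell/2+\e]$ coincide with the paper's; at $x=0$ the paper exploits the exact cancellation $\theta_-'=f(\varphi(0))D(\varphi(0))\,\delta_-'$ against $[\varphi_{xx}(0)D(\varphi(0))^2]^{-1}=\e^2[f(\varphi(0))D(\varphi(0))]^{-1}$, giving $|w(0)|=\e\ell^{-2}|\delta_-'|$ with no Taylor expansion, whereas you expand $G'(\alpha+\delta_-)$; both are fine. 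Like the paper, you dispatch the extension to $|x|\in[\ell/2,\ell/2+\e]$ in one sentence, which is acceptable here.
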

\begin{proof}
We will prove the result only for $w(x,\ell,-1)$; the proof for $w(x,\ell,+1)$ is very similar and we omit it.
First, we consider $w(0,\ell,-1)$; from the formula given by Lemma \ref{lem:varphi_l} it follows that
$$|w(0,\ell,-1)|\leq\e^{-1}\ell^{-2}|\theta'_-(r)||\e^{-2}f(\varphi(0,\ell,-1))D(\varphi(0,\ell,-1))|^{-1},$$
where we used \eqref{eq:stat} and the fact that $\varphi_x(0,\ell,-1)=0$.
By using the definition of $\theta_-$ and $\delta_-$ \eqref{eq:teta-delta}, we obtain
$$\theta'_-(r)=\frac{d}{dr}G(\varphi(0,\ell,-1))=f(\varphi(0,\ell,-1))D(\varphi(0,\ell,-1))\delta'_-(r).$$
Therefore,
$$|w(0,\ell,-1)|\leq\e\ell^{-2}|\delta'_-(r)|\leq C\e^{-1}\delta_-(r),$$
where we used that $|\delta'_-(r)|\leq Cr^{-2}\delta_-(r)$, see Theorem \ref{thm:teta-lead}.
Now, we claim that 
\begin{equation}\label{eq:claim-w}
	|w(x,\ell,-1)|\leq\frac{D(\varphi(0,\ell,-1))}{D(\varphi(x,\ell,-1))}|w(0,\ell,-1)|, \qquad \quad \mbox{ for } x\in[0,\ell/2-\e].
\end{equation}
To prove this, define $\tilde w(x,\ell,-1):=D(\varphi(x,\ell,-1))w(x,\ell,-1)$.
Multiplying by $D(\varphi)$ and differentiating the formula for $w$ given by Lemma \ref{lem:varphi_l}, we infer
\begin{align*}
	\tilde w_x(x,\ell,-1)=&\e^{-1}\ell^{-2}\theta_-'(r)\left[D(\varphi(x,\ell,-1))\varphi_x(x,\ell,-1)\right]_x\int_{\ell/2}^{x}\frac{ds}{\left[D(\varphi(s,\ell,-1))\varphi_x(s,\ell,-1)\right]^2}\\
	&\qquad +\frac{\e^{-1}\ell^{-2}\theta_-'(r)}{D(\varphi_x(x,\ell,-1))\varphi_x(x,\ell,-1)}.
\end{align*}
Hence,
\begin{align*}
	\tilde w_{xx}(x,\ell,-1)=&\e^{-1}\ell^{-2}\theta_-'(r)\left[D(\varphi(x,\ell,-1))\varphi_x(x,\ell,-1)\right]_{xx}\int_{\ell/2}^{x}\frac{ds}{\left[D(\varphi(s,\ell,-1))\varphi_x(s,\ell,-1)\right]^2}\\
	&\qquad +\frac{\e^{-1}\ell^{-2}\theta_-'(r)\left[D(\varphi(x,\ell,-1))\varphi_x(x,\ell,-1)\right]_x}{\left[D(\varphi(s,\ell,-1))\varphi_x(s,\ell,-1)\right]^2}\\
	&\qquad -\frac{\e^{-1}\ell^{-2}\theta_-'(r)\left[D(\varphi_x(x,\ell,-1))\varphi_x(x,\ell,-1)\right]_x}{\left[D(\varphi_x(x,\ell,-1))\varphi_x(x,\ell,-1)\right]^2}\\
	=&\e^{-1}\ell^{-2}\theta_-'(r)\e^{-2}f'(\varphi(x,\ell,-1))\varphi_x(x,\ell,-1)\int_{\ell/2}^{x}\frac{ds}{\left[D(\varphi(s,\ell,-1))\varphi_x(s,\ell,-1)\right]^2}\\
	=&\e^{-2}f'(\varphi(x,\ell,-1))w(x,\ell,-1),
\end{align*}
where we differentiate \eqref{eq:stat} to obtain the formula for $\left[D(\varphi(x,\ell,-1))\varphi_x(x,\ell,-1)\right]_{xx}$.
As a consequence, the function $\tilde w$ satisfies the differential equation
$$\e^2\tilde w_{xx}(x,\ell,-1)=\frac{f'(\varphi(x,\ell,-1))}{D(\varphi(x,\ell,-1))}\tilde w(x,\ell,-1).$$
Notice that the function $\varphi(x,\ell,-1)$ is monotone increasing in $(0,\ell/2)$ and one can choose $\e>0$ sufficiently small such that
$f'(\varphi(x,\ell,-1))>0$ for $x\in(0,\ell/2-\e)$;
then, the coefficient $f'/D$ is positive and since $\tilde w(x,\ell,-1)\leq0$, for any $x\in[0,\ell/2]$, the maximum principle yields
$$|\tilde w(x,\ell,-1)|\leq\max\left\{|\tilde w(0,\ell,-1)|,|\tilde w(\ell/2-\e,\ell,-1)|\right\},$$
which implies \eqref{eq:claim-w}.
Thus, there exists a constant $C>0$ such that
$$|w(x,\ell,-1)|\leq C\e^{-1}\delta_-(r), \qquad \mbox{ for } x\in[0,\ell/2-\e].$$
To deduce the estimate for $w$ on the interval $[\ell/2-\e,\ell/2]$, notice that
$$\e^2\left(D(\varphi)\varphi_x\right)^2=2\left(G(\varphi)-\theta_-\right)$$
implies $\e|\varphi_x|\leq C_1$, for some $C_1>0$ independent on $\e$ and, since
 $$\varphi(x,\ell,-1)\geq\varphi(\ell/2-\e,\ell,-1),\qquad \mbox{ for any } x\in[\ell/2-\e,\ell/2],$$
 $\varphi(\ell/2,\ell,-1)=\frac{\alpha+\beta}{2}$ and $G\left(\frac{\alpha+\beta}{2}\right)>0$,
if $r>0$ is sufficiently small, there exists $C_2>0$ such that $\e^2\left(D(\varphi)\varphi_x\right)^2\geq C_2$.
Substituting in the formula of $w$, we conlude 
$$|w(x,\ell,-1)|\leq C\e^{-4}\theta_-(r)\e^2(\ell/2-x)\leq C\e^{-1}\theta_-(r), \qquad \mbox{ for } x\in[\ell/2-\e,\ell/2],$$
where we used Theorem \ref{thm:teta-lead}.
We proved the bounds for $w(x,\ell,-1)$ when $x\in[0,\ell/2]$, the remaining cases are similar.
\end{proof}

\section{Layer dynamics}\label{sec:layer}
The main goal of this section is to derive a system of ODEs describing the exponentially slow motion of the layers for the metastable states of \eqref{eq:D-model}-\eqref{eq:Neu}.
With this aim, let us rewrite equation \eqref{eq:D-model} as
\begin{equation}\label{eq:op-L}
	u_t=\mathcal{L}(u):=\e^2(D(u)u_x)_x-f(u).
\end{equation}
Following \cite{Carr-Pego}, our analysis of the metastable dynamics will only be valid when the layer positions are well separated and bounded away from the boundary points $a,b$;
for fixed $\rho>0$, we define
\begin{equation*}
	\Omega_\rho:=\bigl\{{\bm h}\in\mathbb{R}^N\, :\,a<h_1<\cdots<h_N<b,\quad
		 h_j-h_{j-1}>\varepsilon/\rho\mbox{ for } j=1,\dots,N+1\bigr\},
\end{equation*}
where $h_0:=2a-h_1$ and $h_{N+1}:=2b-h_N$ (because of the homogeneous Neumann boundary conditions \eqref{eq:Neu}).
By construction, if $\rho_1<\rho_2$, then  $\Omega_{\rho_1}\subset \Omega_{\rho_2}$.

The idea is to associate to any $\bm h\in\Omega_\rho$ a function $u^{\bm h}=u^{\bm h}(x)$ which approximates a metastable
state with $N$ transition points at $h_1,\dots,h_N$ by matching the steady states \eqref{eq:per-l}, whose existence and uniqueness has been proved in Proposition \ref{prop:ell}.
The collection of $u^{\bm h}$ determines a $N$-dimensional manifold; in order to describe the layer dynamics, we shall introduce a projection which permits to separate the solution into a component on the manifold and a corresponding remainder.
Let us start by constructing the function $u^{\bm h}$:
for $\bm h\in\Omega_\rho$ with $\rho<\rho_0$ (where $\rho_0$ is the constant appearing in Proposition \ref{prop:ell}), 
we define the function $u^{\bm h}$ with $N$ transition points at $h_1,\dots,h_N$
by matching together different steady states satisfying \eqref{eq:per-l}, using smooth cut-off functions.
Given $\chi:\mathbb{R}\rightarrow[0,1]$ a $C^\infty$ function with $\chi(x)=0$ for $x\leq-1$ and $\chi(x)=1$ for $x\geq1$,
set 
\begin{equation*}
	\chi^j(x):=\chi\left(\frac{x-h_j}\varepsilon\right) \qquad\textrm{and}\qquad
	\varphi^j(x):=\varphi\left(x-h_{j-1/2},h_j-h_{j-1},(-1)^j\right),
\end{equation*}
where
\begin{equation*}
	h_{j-1/2}:=\frac12(h_{j-1}+h_j), \qquad \qquad j=1,\dots,N+1,
\end{equation*}
that is, $h_{j-1/2}$ is the midpoint of the line segment connecting $h_{j-1}$ and $h_j$; notice that $h_{1/2}=a$, $h_{N+1/2}=b$.
Then, the function $u^{\bm h}$ is given by the convex combination
\begin{equation}\label{u^h(x)}
	u^{\bm h}:=\left(1-\chi^j\right)\varphi^j+\chi^j\varphi^{j+1} \qquad \textrm{in}\quad I_j:=[h_{j-1/2},h_{j+1/2}], \qquad j=1,\dots,N,
\end{equation}
and we define the base manifold
\begin{equation*}
	\mathcal{M}:=\{u^{\bm h} :\bm h\in\Omega_\rho\}.
\end{equation*} 
By definition, $u^{\bm h}$  is a smooth function of $x$ and $\bm h$ and enjoys the properties
\begin{equation}\label{eq:rompiballe}
	\begin{aligned}
	&u^{\bm h}(a) =\varphi(0,2h_1-2a,-1)<\frac{\alpha+\beta}{2}, \\
	&u^{\bm h}(h_{j-1/2}) =\varphi\left(0,h_j-h_{j-1},(-1)^j \right), \quad j=1,\dots,N+1, \\
	&u^{\bm h}(h_j) =\frac{\alpha+\beta}{2},
			 \qquad \mathcal{L}(u^{\bm h}(x))=0 \quad \textrm{for }|x-h_j|\geq\varepsilon, \quad j=1,\dots,N.
	\end{aligned}
\end{equation}
In what follows, we use the notation
\begin{equation*}
	u^{\bm h}_j:=\partial_{h_j} u^{\bm h},
\end{equation*}
and we denote the tangent space to $\mathcal{M}$ at $u^{\bm h}$ by $T\mathcal{M}(u^{\bm h})=\mbox{span}\{u^{\bm h}_j : j=1,\dots,N\}$. 
At this point, the natural idea would be to construct a tubular neighborhood of $\mathcal{M}$,
with coordinates $(\bm h,w)$ where $w$ is orthogonal to $T\mathcal{M}(u^{\bm h})$.
Since $\mathcal{M}$ is not invariant, there is higher flexibility in the construction of its neighborhood
and tubular co-ordinates near $\mathcal{M}$ can be defined using approximate tangent vectors to $\mathcal{M}$. 
For $j=1,\dots,N$, introduce the cutoff function $\gamma^j$ given by
\begin{equation*}
	\gamma^j(x):=\chi\left(\frac{x-h_{j-1/2}-\varepsilon}\varepsilon\right)\left[1-\chi\left(\frac{x-h_{j+1/2}+\varepsilon}\varepsilon\right)\right].
\end{equation*}   
Then, the {\it approximate tangent vectors} $\tau^{\bm h}_j$ are defined by
\begin{equation*}
	\tau^{\bm h}_j(x):=-\gamma^j(x)D(u^{\bm h}(x))u^{\bm h}_x(x).
\end{equation*}
By construction, $\tau^{\bm h}_j$ are smooth functions of $x$ and $\bm h$ and are such that
\begin{equation}\label{eq:prop-tau}
	\begin{aligned}
	\tau^{\bm h}_j(x)&=0,				&\quad \textrm{for}\quad &x\notin[h_{j-1/2},h_{j+1/2}],\\
	\tau^{\bm h}_j(x)&=-D(u^{\bm h})u^{\bm h}_x(x),	&\quad \textrm{for}\quad &x\in[h_{j-1/2}+2\varepsilon,h_{j+1/2}-2\varepsilon]. 
	\end{aligned}
\end{equation}
As we will see later, the choice of the tangent vectors $\tau^{\bm h}_j$ plays a crucial role in the derivation of the equations describing the layer dynamics.

The key idea in our strategy is to approximate the metastable states for \eqref{eq:op-L} by using the function $u^{\bm h}$; 
hence, let us use the decomposition $u=u^{\bm h}+v$ for the solution, where $u^{\bm h}\in\mathcal{M}$ and the remainder $v$ satisfies 
\begin{equation}\label{eq:ort-con}
	(v,\tau^{\bm h}_j)_{L^2(a,b)}=\int_a^b v\tau^{\bm h}_j\,dx=:\langle v, \tau^{\bm h}_j\rangle=0, \qquad \qquad \mbox{ for any } j=1,\dots,N.
\end{equation}
By using the relation $v=u-u^{\bm h}$ and differentiating with respect to $t$ the relations \eqref{eq:ort-con}, we obtain
\begin{equation}\label{eq:h-1}
	\sum_{k=1}^N\left(\langle u^{\bm h}_k,\tau^{\bm h}_j\rangle-\langle v,\tau^{\bm h}_{jk}\rangle\right)h'_k=\langle\mathcal{L}(u^{\bm h}+v),\tau^{\bm h}_j\rangle, 
	 \qquad \qquad j=1,\dots,N,
\end{equation}
where, as above, we use the notation  
\begin{equation*}
	\tau^{\bm h}_{ji}:=\partial_{h_i} \tau^{\bm h}_j.
\end{equation*}
Equation \eqref{eq:h-1} should be coupled and studied together with the equation for $v$ (see \cite{Carr-Pego}), 
obtained by differentiating $v=u-u^{\bm h}$, which reads as
$$v_t=\mathcal{L}(u^{\bm h}+v)-\sum_{k=1}^Nu^{\bm h}_kh'_k.$$
In such a way, one may derive a system of the form 
$$\bm h'=\mathcal{F}(\bm h,v)$$
for the layers.
Inspired by \cite{Carr-Pego}, our objective is to derive a reduced system that governs the motion of the layers.
To this end, we employ the expansion
$$\mathcal{F}(\bm h,v)=\mathcal{P}^*(\bm h)+\mathcal{R}(\bm h,v),$$
where the leading-order term $\mathcal{P}^*(\bm h)$ is dominant, while the remainder $\mathcal{R}(\bm h,v)$ is significantly smaller and can thus be considered negligible.
We begin by considering an initial datum $u_0(x)=u^{\bm h(0)}(x)+v_0(x)$ where $v_0$ is sufficiently small so that $u_0$ has a $N$-transition layer structure, 
i.e. $u_0$ satisfies \eqref{eq:ass-u0} and \eqref{eq:energy-ini}.
By applying Theorem \ref{thm:main} we conclude that the solution $u$ maintains this $N$-transition layer structure for an exponentially long time.
Our derivation of the reduced system is based on the assumption that the terms involving $v$ are negligible in comparison to the leading-order contributions.
Consequently, by using the estimates in Section \ref{sec:est-per}, we derive an equation of the form $\bm h'=\mathcal{P}^*(\bm h)$ and in the next section we provide numerical evidence supporting the claim that this reduced system accurately describes the metastable dynamics for \eqref{eq:op-L}.
Following the aforementioned approach and by formally neglecting the terms involving $v$, we end up with 
\begin{equation}\label{eq:h-2}
	\sum_{k=1}^N\langle u^{\bm h}_k,\tau^{\bm h}_j\rangle h'_k=\langle\mathcal{L}(u^{\bm h}),\tau^{\bm h}_j\rangle, 
	 \qquad \qquad j=1,\dots,N.
\end{equation}
Hence, we need to find the leading-order terms coming from $\langle u^{\bm h}_k,\tau^{\bm h}_j\rangle$ and $\langle\mathcal{L}(u^{\bm h}),\tau^{\bm h}_j\rangle$.
To begin with, we consider the term $\langle\mathcal{L}(u^{\bm h}),\tau^{\bm h}_j\rangle$; 
for $j=1,\dots,N+1$, we set
\begin{equation*}
	r_{j}:=\frac{\varepsilon}{h_j-h_{j-1}},
\end{equation*}
and
\begin{equation*}
	\theta^{j}:=\left\{\begin{aligned}
		&\theta_-(r_{j})  	&j \textrm{ odd},\\
		&\theta_+(r_{j}) 	&j \textrm{ even},
		\end{aligned}\right.
	\qquad\qquad
	\delta^{j}:=\left\{\begin{aligned}
		&\delta_-(r_{j})	&j \textrm{ odd}, \\
		&\delta_+(r_{j})	&j \textrm{ even},
		\end{aligned}\right.
\end{equation*}
where $\theta_\pm$ and $\delta_\pm$ are defined in \eqref{eq:teta-delta}.
Thanks to these definitions and the suitable choice of $\tau^{\bm h}_j$, we can prove the following result.
\begin{lem}
Assume that $D,f$ satisfy \eqref{eq:ass-D}-\eqref{eq:ass-f}-\eqref{eq:ass-int0}-\eqref{eq:ass-int1}.
If $\rho>0$ is sufficiently small and $\bm h\in\Omega_\rho$, then 
\begin{equation}\label{eq:L-tang}
	\bigl\langle\mathcal{L}(u^{\bm h}),\tau^{\bm h}_j\bigr\rangle=\theta^{j+1}-\theta^j,
\end{equation} 
for any $j=1,\dots,N$.
\end{lem}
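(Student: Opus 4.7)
The plan is to exploit the fact that, by the last property in \eqref{eq:rompiballe}, $\mathcal{L}(u^{\bm h})$ is supported on the narrow strips $[h_k-\e,h_k+\e]$ around each layer, together with the algebraic identity that $\mathcal{L}(u)\cdot D(u)u_x$ is an exact $x$-derivative. This will reduce $\langle\mathcal{L}(u^{\bm h}),\tau^{\bm h}_j\rangle$ to a simple telescoping boundary evaluation in which the constants $\theta^j$ and $\theta^{j+1}$ appear naturally via the first integral \eqref{eq:phipm-first}.

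First I localize. Since $\tau^{\bm h}_j$ is supported on $I_j=[h_{j-1/2},h_{j+1/2}]$ and $\mathcal{L}(u^{\bm h})$ vanishes outside the union $\bigcup_{k=1}^N[h_k-\e,h_k+\e]$, the only contribution to the integral comes from $[h_j-\e,h_j+\e]$. Choosing $\rho>0$ sufficiently small we guarantee $\tfrac12(h_j-h_{j-1})>3\e$ and $\tfrac12(h_{j+1}-h_j)>3\e$, so that $[h_j-\e,h_j+\e]\subset[h_{j-1/2}+2\e,h_{j+1/2}-2\e]$. On this inner plateau $\gamma^j\equiv 1$ by \eqref{eq:prop-tau}, and therefore $\tau^{\bm h}_j=-D(u^{\bm h})u^{\bm h}_x$ on the effective integration region. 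Next, using $G'(u)=f(u)D(u)$, one verifies directly that
$$\mathcal{L}(u)\cdot D(u)u_x=\e^2(D(u)u_x)_x D(u)u_x-f(u)D(u)u_x=\frac{d}{dx}\!\left[\tfrac{\e^2}{2}\bigl(D(u)u_x\bigr)^2-G(u)\right].$$
Substituting $u=u^{\bm h}$, this gives
$$\bigl\langle\mathcal{L}(u^{\bm h}),\tau^{\bm h}_j\bigr\rangle=-\left[\tfrac{\e^2}{2}\bigl(D(u^{\bm h})u^{\bm h}_x\bigr)^2-G(u^{\bm h})\right]_{x=h_j-\e}^{x=h_j+\e}.$$

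Finally I evaluate the boundary terms. By definition of $\chi^j$ we have $\chi^j(x)=0$ for $x\leq h_j-\e$ and $\chi^j(x)=1$ for $x\geq h_j+\e$, so $u^{\bm h}$ coincides with the periodic profile $\varphi^j$ on $[h_{j-1/2},h_j-\e]$ and with $\varphi^{j+1}$ on $[h_j+\e,h_{j+1/2}]$. Each $\varphi^k$ solves the first-order relation \eqref{eq:phipm-first}, so $\tfrac{\e^2}{2}(D(\varphi^k)\varphi^k_x)^2-G(\varphi^k)\equiv -G(\varphi^k(h_{k-1/2}))=-\theta^k$, where the last equality uses the second line of \eqref{eq:rompiballe} and the definition of $\theta^k$ via $\theta_\pm$ according to the parity of $k$. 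Consequently the bracketed quantity equals $-\theta^j$ at $x=h_j-\e$ and $-\theta^{j+1}$ at $x=h_j+\e$, yielding $\langle\mathcal{L}(u^{\bm h}),\tau^{\bm h}_j\rangle=-(-\theta^{j+1}+\theta^j)=\theta^{j+1}-\theta^j$, which is \eqref{eq:L-tang}. There is no deep obstacle, only careful bookkeeping: one must check that the parity convention $(-1)^j$ in the definition of $\varphi^j$ aligns with the $-/+$ convention in $\theta_\pm$ and $\theta^j$, so that the two boundary contributions come out precisely as $-\theta^j$ and $-\theta^{j+1}$ in the correct order.
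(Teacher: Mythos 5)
Your proof is correct and follows essentially the same route as the paper's: localize to $[h_j-\e,h_j+\e]$ using the support of $\tau^{\bm h}_j$ and the vanishing of $\mathcal{L}(u^{\bm h})$ away from the layers, recognize $\mathcal{L}(u^{\bm h})D(u^{\bm h})u^{\bm h}_x$ as an exact derivative, and evaluate the boundary terms via the first integral \eqref{eq:phipm-first}. Your explicit verification that $[h_j-\e,h_j+\e]$ lies in the plateau where $\gamma^j\equiv1$, and the identification of the bracket with $-\theta^j$ and $-\theta^{j+1}$, match the paper's evaluation at the midpoints $h_{j\mp1/2}$.
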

\begin{proof}
By using \eqref{eq:rompiballe} and, in particular that
\begin{equation*}
	\mathcal{L}(u^{\bm h}(x))=0,\qquad \textrm{for }|x-h_j|\geq\varepsilon,
\end{equation*}
we deduce
\begin{align*}
	\bigl\langle\mathcal{L}(u^{\bm h}),\tau^{\bm h}_j\bigr\rangle
		&=-\int_{h_j-\varepsilon}^{h_j+\varepsilon}\bigl\{\e^2(D(u^{\bm h})u^{\bm h}_x)_x-f(u^{\bm h})\bigr\}D(u^{\bm h})u^{\bm h}_x\,dx\\
		&=\left\{G(u^{\bm h})-\frac{\e^2}2\left(D(u^{\bm h})u^{\bm h}_x\right)^2\right\}\Bigg|_{h_j-\varepsilon}^{h_j+\varepsilon}=G(u^{\bm h}(h_{j+1/2}))-G(u^{\bm h}(h_{j-1/2}))\\
		&=\varphi\left(0,h_{j+1}-h_j,(-1)^{j+1}\right)-\varphi\left(0,h_{j}-h_{j-1},(-1)^{j}\right),
\end{align*}
and \eqref{eq:L-tang} follows from the definition of $\theta^j$.
\end{proof}
By substituting \eqref{eq:L-tang} in \eqref{eq:h-2} and using Theorem \ref{thm:teta-lead}, we obtain 
\begin{equation}\label{eq:h-3}
	\sum_{k=1}^N\langle u^{\bm h}_k,\tau^{\bm h}_j\rangle h'_k=\theta^{j+1}-\theta^j, 
	 \qquad \qquad j=1,\dots,N,
\end{equation}
with
\begin{equation*}
	\theta^{j}:=\left\{\begin{aligned}
	&\tfrac{1}{2}A_\alpha^2D(\alpha)^2K_\alpha^2\exp\left(-\frac{A_\alpha\ell_j}{\e}\right)\left(1+\mathcal{O}\left\{\e^{-1}\exp\left(-\frac{A_\alpha\ell_j}{2\e}\right)\right\}\right),
		 &j \textrm{ odd}, \\
		&\tfrac{1}{2}A_\beta^2D(\beta)^2K_\beta^2\exp\left(-\frac{A_\beta\ell_j}{\e}\right)\left(1+\mathcal{O}\left\{\e^{-1}\exp\left(-\frac{A_\beta\ell_j}{2\e}\right)\right\}\right), 	
		\qquad &j \textrm{ even},
		\end{aligned}\right.
\end{equation*}
where $\ell_j:=h_j-h_{j-1}$.
It is very important to notice that for some $C>0$
$$|\theta^j|\leq C\exp\left(-\frac{A\ell_j}{\e}\right), \qquad \qquad j=1,\dots,N+1,$$
where $A$ is defined in \eqref{eq:E-int}.
Hence, the right hand side of \eqref{eq:h-3} is exponentially small if the distance between the layers (the transition width) is greater than $\e$.
In order to provide a formula for the left hand side of \eqref{eq:h-3}, we need some auxiliary results.
First, we consider the functions $u^{\bm h}_j$, that is the derivative with respect to $h_j$ of the function $u^{\bm h}$ defined in \eqref{u^h(x)}.

\begin{lem}\label{lem:u^h_j}
Assume that $D,f$ satisfy \eqref{eq:ass-D}-\eqref{eq:ass-f} and \eqref{eq:ass-int0}-\eqref{eq:ass-int1}.
If $\rho>0$ is sufficiently small and $\bm h\in\Omega_\rho$, then the support of $u^{\bm h}_j$ is contained in the interval $[h_{j-1}-\e,h_{j+1}+\e]$ and 
\begin{equation*}
	u^{\bm h}_j=
	\begin{cases}
		\chi^{j-1}w^j, \qquad \qquad \qquad & \textrm{ in } I_{j-1},\\
		(1-\chi^j)(-\varphi_x^j+w^j)-\chi^j(\varphi_x^{j+1}+w^{j+1})+\chi^j_x(\varphi^j-\varphi^{j+1}), &\mbox{ in } I_{j},\\
		-(1-\chi^{j+1})w^{j+1}, &\mbox{ in } I_{j+1}, 
	\end{cases}
\end{equation*}
where $\chi^j, I_j$ are the same as in \eqref{u^h(x)} and 
$$w^j(x):=w(x-h_{j-1/2},h_j-h_{j-1},(-1)^j),$$
with $w$ defined in Lemma \ref{lem:varphi_l}.
\end{lem}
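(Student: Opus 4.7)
The plan is a direct chain-rule computation, exploiting two observations: (i) the function $u^{\bm h}$ depends on $h_j$ only on the three consecutive intervals $I_{j-1}$, $I_j$, $I_{j+1}$, because $\chi^k$ depends only on $h_k$ and $\varphi^k$ depends only on $h_{k-1}$ and $h_k$; and (ii) on each piece the partial $\partial_{h_j}$ of $\varphi^k$ reduces, via Lemma~\ref{lem:varphi_l}, to a multiple of $\varphi_x^k$ plus the function $w^k$.

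First I would record the elementary identities
\begin{equation*}
\partial_{h_j}\chi^j=-\chi_x^j,\qquad \partial_{h_j}h_{j-1/2}=\tfrac12,\qquad \partial_{h_j}h_{j+1/2}=\tfrac12,\qquad \partial_{h_j}\ell_j=1,\qquad \partial_{h_j}\ell_{j+1}=-1.
\end{equation*}
Writing $\varphi^k(x)=\varphi\bigl(x-h_{k-1/2},\ell_k,(-1)^k\bigr)$ and setting $y=x-h_{k-1/2}$, the chain rule gives
\begin{equation*}
\partial_{h_j}\varphi^j=-\tfrac12\varphi_x^j+\varphi_\ell^j,\qquad
\partial_{h_j}\varphi^{j+1}=-\tfrac12\varphi_x^{j+1}-\varphi_\ell^{j+1}.
\end{equation*}
Applying Lemma~\ref{lem:varphi_l} in the form $\varphi_\ell=-\tfrac12(\mathrm{sign}\,y)\varphi_x+w$, and keeping careful track of the sign of $y$ on each interval, one obtains for $x\in I_{j-1}$ (where $y=x-h_{j-1/2}\le 0$) the identity $\partial_{h_j}\varphi^j=w^j$; for $x\in I_j$ (where $y=x-h_{j-1/2}\ge 0$ and $x-h_{j+1/2}\le 0$) the identities $\partial_{h_j}\varphi^j=-\varphi_x^j+w^j$ and $\partial_{h_j}\varphi^{j+1}=-\varphi_x^{j+1}-w^{j+1}$; and for $x\in I_{j+1}$ (where $x-h_{j+1/2}\ge 0$) the identity $\partial_{h_j}\varphi^{j+1}=-w^{j+1}$.

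Substituting these into the piecewise definition \eqref{u^h(x)} of $u^{\bm h}$ then yields the claimed formulas. On $I_{j-1}$ only the second summand $\chi^{j-1}\varphi^j$ depends on $h_j$, giving $u^{\bm h}_j=\chi^{j-1}w^j$. On $I_j$ one combines the three contributions $-(\partial_{h_j}\chi^j)\varphi^j$, $(1-\chi^j)\partial_{h_j}\varphi^j$, $(\partial_{h_j}\chi^j)\varphi^{j+1}$, and $\chi^j\partial_{h_j}\varphi^{j+1}$; the $\chi_x^j$ terms assemble into $\chi^j_x(\varphi^j-\varphi^{j+1})$. On $I_{j+1}$ only $(1-\chi^{j+1})\varphi^{j+1}$ carries $h_j$, giving $-(1-\chi^{j+1})w^{j+1}$. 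The support claim then follows from the fact that $\chi^{j-1}$ vanishes for $x\le h_{j-1}-\e$ and $1-\chi^{j+1}$ vanishes for $x\ge h_{j+1}+\e$, while outside $I_{j-1}\cup I_j\cup I_{j+1}$ the function $u^{\bm h}$ has no $h_j$-dependence.

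The computation is essentially routine; the only real care needed is the sign bookkeeping when Lemma~\ref{lem:varphi_l} is applied at points $y$ of each sign, and the matching at the common endpoints $h_{j-1/2}$, $h_{j+1/2}$ (where $y=0$ and the two sign choices must give consistent values, which they do because $w$ is continuous there by the explicit formula for $w(0,\ell,\pm1)$ in Lemma~\ref{lem:varphi_l}). This is the only potentially delicate point, and it is handled by remarking that the formula produced on $I_{j-1}$ at $x=h_{j-1/2}$ equals the formula produced on $I_j$ at the same point, so no ambiguity arises.
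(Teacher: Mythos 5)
Your proposal is correct and follows essentially the same route as the paper's proof: a direct differentiation of the piecewise definition of $u^{\bm h}$, applying Lemma \ref{lem:varphi_l} with the sign of $x-h_{j\mp1/2}$ tracked on each of $I_{j-1}$, $I_j$, $I_{j+1}$, and reading off the support from the cutoffs. The remark about consistency at the interval endpoints is a sensible extra check but is not needed beyond what the paper does.
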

\begin{proof}
Consider the definition \eqref{u^h(x)} of $u^{\bm h}$ and notice that only the functions $\chi^j, \varphi^j$ and $\varphi^{j+1}$ depend on $h_j$. 
As a consequence, we need to focus the attention only on $I_{j-1}, I_j$ and $I_{j+1}$ when differentiating with respect to $h_j$.
In $I_{j-1}$ one has 
\begin{align*}
	u^{\bm h}_j(x)&=\chi^{j-1}(x)\frac{\partial}{\partial h_j}\varphi^j(x)\\
	&=\chi^{j-1}(x)\left[-\tfrac12\varphi_x\left(x-h_{j-1/2},h_j-h_{j-1},(-1)^j\right)+\varphi_\ell\left(x-h_{j-1/2},h_j-h_{j-1},(-1)^j\right)\right]\\
	&=\chi^{j-1}(x)w^{j}(x),
\end{align*}
where in the last passage we used Lemma \ref{lem:varphi_l} and the fact that $x-h_{j-1/2}<0$ in $I_{j-1}$.
Moreover, since $\chi^{j-1}(x)=0$ if $x\leq h_{j-1}-\e$, we deduce 
$$u^{\bm h}_j(x)=0, \qquad \mbox{ for }x\in[h_{j-3/2},h_{j-1}-\e].$$
Similarly, in $I_j$ \eqref{u^h(x)} implies
\begin{align*}
	u^{\bm h}_j&=(1-\chi^{j})\frac{\partial}{\partial h_j}\varphi^j+\chi^j\frac{\partial}{\partial h_j}\varphi^{j+1}+\frac{\partial}{\partial h_j}\chi^{j}(\varphi^{j+1}-\varphi^j)\\
	&=(1-\chi^{j})(-\varphi^j_x+w^j)+\chi^j(-\varphi^{j+1}_x-w^{j+1})-\chi^{j}_x(\varphi^{j+1}-\varphi^j),
\end{align*}
where we used Lemma \ref{lem:varphi_l} and the fact that $x-h_{j-1/2}>0$ and $x-h_{j+1/2}<0$ in $I_{j}$.
Finally, in $I_{j+1}$ it holds 
\begin{align*}
	\frac{\partial}{\partial h_j}\varphi^{j+1}(x)&=-\tfrac12\varphi_x\left(x-h_{j+1/2},h_{j+1}-h_j,(-1)^{j+1}\right)\\
	&\qquad \qquad-\varphi_\ell\left(x-h_{j+1/2},h_{j+1}-h_j,(-1)^{j+1}\right)\\
	&=-w^{j+1}(x)
\end{align*}
because of Lemma \ref{lem:varphi_l} and the fact that $x-h_{j+1/2}>0$ in $I_{j+1}$.
Hence, we end up with
\begin{equation*}
	u^{\bm h}_j(x)=(1-\chi^{j+1}(x))\frac{\partial}{\partial h_j}\varphi^{j+1}(x)=-(1-\chi^{j+1}(x))w^{j+1}(x), \qquad \quad x\in I_{j+1}.
\end{equation*}
In conclusion, we obtain the formula for $u^{\bm h}_j$ and since $\chi^{j+1}(x)=1$ if $x\geq h_{j+1}+\e$,
the interval $[h_{j-1}-\e,h_{j+1}+\e]$ contains its support.
\end{proof}
From the formula for $u^{\bm h}_j$ presented in Lemma \ref{lem:u^h_j}, it follows that we need upper bounds for $|\varphi^j-\varphi^{j+1}|$ and $|\varphi^j_x-\varphi^{j+1}_x|$.
\begin{lem}\label{lem:varphi^j-diff}
Assume that $D,f$ satisfy \eqref{eq:ass-D}-\eqref{eq:ass-f} and \eqref{eq:ass-int0}-\eqref{eq:ass-int1}.
If $\rho>0$ is sufficiently small and $\bm h\in\Omega_\rho$, then there exists $\e_0,C_1,C_2>0$ such that 
for $\e\in(0,\e_0)$ it holds 
\begin{align*}
	|\varphi^j(x)-\varphi^{j+1}(x)|&\leq C_1|\theta^j-\theta^{j+1}|,\\
	|\varphi^j_x(x)-\varphi^{j+1}_x(x)|&\leq C_2\e^{-1}|\theta^j-\theta^{j+1}|,
\end{align*}
for any $x\in[h_j-\e,h_j+\e]$.
\end{lem}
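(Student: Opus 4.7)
The strategy is to treat $\varphi^j$ and $\varphi^{j+1}$ as two solutions of the same autonomous second-order ODE \eqref{eq:stat} that agree at $x=h_j$ and differ only in their derivatives at that point. Controlling the mismatch in derivatives at $h_j$ by $|\theta^j-\theta^{j+1}|$ and then applying ODE continuous dependence on the length-$2\e$ interval will yield both estimates.

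First I would rescale by setting $y=(x-h_j)/\e$ and defining $\tilde\varphi^j(y):=\varphi^j(h_j+\e y)$, $\tilde\varphi^{j+1}(y):=\varphi^{j+1}(h_j+\e y)$. Equation \eqref{eq:stat} transforms into the $\e$-free autonomous equation $(D(\tilde\varphi)\tilde\varphi_y)_y=f(\tilde\varphi)$ and the target interval $[h_j-\e,h_j+\e]$ becomes $y\in[-1,1]$. Since $\varphi^j$ is a priori defined only for $x\leq h_j$ (its natural domain being $[h_{j-1},h_j]$) and $\varphi^{j+1}$ only for $x\geq h_j$, I extend each to $y\in[-1,1]$ through the Cauchy problem for the rescaled autonomous equation; this extension is available for $\rho$ small enough, since the initial derivatives at $y=0$ are uniformly bounded in $\e$ (see the next step) and the coefficients are smooth near $\tilde\varphi=\tfrac{\alpha+\beta}{2}$.

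Next I compare the initial data at $y=0$. By the boundary conditions in \eqref{eq:per-l}, $\tilde\varphi^j(0)=\tilde\varphi^{j+1}(0)=\tfrac{\alpha+\beta}{2}$. Using the first-order identity \eqref{eq:phipm-first},
\begin{equation*}
\e D\bigl(\tfrac{\alpha+\beta}{2}\bigr)\,\varphi^j_x(h_j)=\sigma\sqrt{2\bigl(G\bigl(\tfrac{\alpha+\beta}{2}\bigr)-\theta^j\bigr)},
\end{equation*}
and the analogous formula with $j$ replaced by $j+1$. A direct inspection of the monotonicity of $\varphi(\cdot,\ell,\pm 1)$ at its endpoints $\pm\ell/2$ shows that both derivatives share the \emph{same} sign $\sigma$ (positive for $j$ odd, negative for $j$ even), which is what allows the two square roots to subtract rather than add. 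Rationalizing,
\begin{equation*}
\bigl|\sqrt{2(G(\tfrac{\alpha+\beta}{2})-\theta^j)}-\sqrt{2(G(\tfrac{\alpha+\beta}{2})-\theta^{j+1})}\bigr|=\frac{\sqrt{2}\,|\theta^j-\theta^{j+1}|}{\sqrt{G(\tfrac{\alpha+\beta}{2})-\theta^j}+\sqrt{G(\tfrac{\alpha+\beta}{2})-\theta^{j+1}}},
\end{equation*}
and invoking Theorem \ref{thm:teta-lead} to guarantee that $\theta^j,\theta^{j+1}$ are exponentially small so the denominator is bounded below by a positive constant, I conclude
\begin{equation*}
|\tilde\varphi^j_y(0)-\tilde\varphi^{j+1}_y(0)|=\e\,|\varphi^j_x(h_j)-\varphi^{j+1}_x(h_j)|\leq C|\theta^j-\theta^{j+1}|.
\end{equation*}

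Finally, standard continuous dependence on initial data for the $\e$-free autonomous ODE on the bounded interval $[-1,1]$ gives
\begin{equation*}
\sup_{|y|\leq 1}\bigl(|\tilde\varphi^j(y)-\tilde\varphi^{j+1}(y)|+|\tilde\varphi^j_y(y)-\tilde\varphi^{j+1}_y(y)|\bigr)\leq C|\theta^j-\theta^{j+1}|,
\end{equation*}
and undoing the scaling (so that $\varphi_x=\e^{-1}\tilde\varphi_y$) yields both announced bounds. The main conceptual obstacle is the sign-matching of $\varphi^j_x(h_j)$ and $\varphi^{j+1}_x(h_j)$: without it the derivative difference would be of order $\e^{-1}$ rather than exponentially small, killing the argument. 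The extension of each $\varphi^j$ past $h_j$ through the autonomous ODE is a minor technicality handled by local existence and the uniform boundedness of the initial data.
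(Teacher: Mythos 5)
Your proof is correct and follows essentially the same route as the paper: both arguments anchor at $x=h_j$, where the two profiles coincide and their derivatives have the same sign, use the first integral \eqref{eq:phipm-first} together with $G\bigl(\tfrac{\alpha+\beta}{2}\bigr)>0$ to reduce the discrepancy to $|\theta^j-\theta^{j+1}|$, and close with a Gronwall-type estimate over the length-$2\e$ interval. The only difference is cosmetic: the paper applies Gronwall directly to the first-order ODE satisfied by $\nu=\varphi^j-\varphi^{j+1}$ (which vanishes at $h_j$), whereas you compare Cauchy data for the rescaled second-order equation and invoke continuous dependence.
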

\begin{proof}
Set $\nu(x):=\varphi^j(x)-\varphi^{j+1}(x)$ and notice that $\nu(h_j)=0$.
In view of \eqref{eq:phipm-first}, we deduce
\begin{align*}
	\e\nu'&=\sqrt2\left(\frac{\sqrt{G(\varphi^j)-\theta^j}}{D(\varphi^j)}-\frac{\sqrt{G(\varphi^{j+1})-\theta^{j+1}}}{D(\varphi^{j+1})}\right)\\
	&=\frac{\sqrt2}{D(\varphi^j)}\left(\sqrt{G(\varphi^j)-\theta^j}-\sqrt{G(\varphi^{j+1})-\theta^{j+1}}\right)\\
	&\qquad\quad+\sqrt2\left(\frac{1}{D(\varphi^{j})}-\frac{1}{D(\varphi^{j+1})}\right)\sqrt{G(\varphi^{j+1})-\theta^{j+1}}\\
	&=\frac{\sqrt2}{D(\varphi^j)\left(\sqrt{G(\varphi^j)-\theta^j}+\sqrt{G(\varphi^{j+1})-\theta^{j+1}}\right)}\left[G(\varphi^j)-G(\varphi^{j+1})+\theta^{j+1}-\theta^j\right]\\
	&\qquad\quad+\frac{\sqrt2}{D(\varphi^j)D(\varphi^{j+1})}\left(D(\varphi^{j+1})-D(\varphi^{j})\right)\sqrt{G(\varphi^{j+1})-\theta^{j+1}}.
\end{align*}
As a consequence, there exists a positive constant $C>0$ such that
$$\e|\nu'|\leq C\left(|\varphi^j-\varphi^{j+1}|+|\theta^{j+1}-\theta^j|\right)=C\left(|\nu|+|\theta^{j+1}-\theta^j|\right).$$
By using Gronwall's inequality, we end up with
$$|\nu|\leq C_1|\theta^{j+1}-\theta^j|,$$
for some $C_1>0$. 
Hence,
$$\e|\nu'|\leq C_2|\theta^{j+1}-\theta^j|,$$
for some $C_2>0$ and the proof is complete.
\end{proof}
Now we are ready to establish a formula for the left hand side of \eqref{eq:h-3}.
\begin{thm}\label{thm:matrix-S}
Assume that $D,f$ satisfies \eqref{eq:ass-D}-\eqref{eq:ass-f} and \eqref{eq:ass-int0}-\eqref{eq:ass-int1}.
There exists $\bar\rho,C>0$ such that if $\rho\in(0,\bar\rho)$ and $\bm h\in\Omega_\rho$, then
\begin{equation}\label{eq:S-diag}
	|\langle u^{\bm h}_j,\tau^{\bm h}_j\rangle-\e^{-1}S_G|\leq C\e^{-1}\max\left\{\delta^j,\delta^{j+1}\right\}\leq C\e^{-1}\exp\left(-\frac{A\ell_j}{2\e}\right),
	\qquad j=1,\dots,N,	
\end{equation}
where $S_G$ and $A$ are defined in \eqref{eq:S_G} and \eqref{eq:E-int}, respectively.
Moreover,
\begin{equation}\label{eq:S-tridiag}
	|\langle u^{\bm h}_j,\tau^{\bm h}_{j+1}\rangle|+|\langle u^{\bm h}_{j+1},\tau^{\bm h}_j\rangle|\leq C\e^{-1}\delta^{j+1}\leq C\e^{-1}\exp\left(-\frac{A\ell_{j+1}}{2\e}\right),
	\qquad j=1,\dots,N-1,	
\end{equation}
and 
\begin{equation}\label{eq:S-null}
	\langle u^{\bm h}_k,\tau^{\bm h}_j\rangle=0 \; \mbox{ if } \; |k-j|>1.
\end{equation}
\end{thm}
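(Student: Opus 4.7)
The vanishing identity \eqref{eq:S-null} is a pure support argument. By Lemma \ref{lem:u^h_j}, $u^{\bm h}_k$ is supported in $[h_{k-1}-\e, h_{k+1}+\e]$, while \eqref{eq:prop-tau} places $\tau^{\bm h}_j$ in $I_j=[h_{j-1/2}, h_{j+1/2}]$. Since $\bm h\in\Omega_\rho$ forces $\ell_{j+1}/2=h_{j+1}-h_{j+1/2}>\e/(2\rho)>\e$ whenever $\bar\rho<1/2$, these two supports are disjoint as soon as $|k-j|\ge 2$.

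For the diagonal \eqref{eq:S-diag}, the starting point is the algebraic identity obtained by differentiating \eqref{u^h(x)} in $x$ and comparing with the formula furnished by Lemma \ref{lem:u^h_j}: the terms proportional to $\chi^j_x$ cancel exactly and on $I_j$ one finds
\begin{equation*}
    u^{\bm h}_j = -u^{\bm h}_x + \tilde w, \qquad \tilde w:=(1-\chi^j)w^j - \chi^j w^{j+1}.
\end{equation*}
Combined with $\tau^{\bm h}_j=-\gamma^j D(u^{\bm h}) u^{\bm h}_x$ and the splitting $\gamma^j=1-(1-\gamma^j)$, this yields
\begin{equation*}
    \langle u^{\bm h}_j,\tau^{\bm h}_j\rangle = \int_{I_j} D(u^{\bm h})(u^{\bm h}_x)^2\,dx - \int_{I_j}(1-\gamma^j) D(u^{\bm h})(u^{\bm h}_x)^2\,dx - \int_{I_j}\gamma^j D(u^{\bm h}) u^{\bm h}_x \tilde w\,dx.
\end{equation*}

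The first integral supplies the leading term. Splitting $I_j$ at $h_j\pm\e$, the two bulk regions $[h_{j-1/2},h_j-\e]$ and $[h_j+\e,h_{j+1/2}]$ have $u^{\bm h}$ equal to $\varphi^j$ and $\varphi^{j+1}$, respectively; on the transition strip $[h_j-\e,h_j+\e]$, Lemma \ref{lem:varphi^j-diff} provides $|u^{\bm h}-\varphi^j|\le C|\theta^j-\theta^{j+1}|$ and $|u^{\bm h}_x-\varphi^j_x|\le C\e^{-1}|\theta^j-\theta^{j+1}|$, so the mixing contributes only $\mathcal{O}(\e^{-1}|\theta^j-\theta^{j+1}|)$, negligible since $\theta\sim\delta^2$ by Theorem \ref{thm:teta-lead}. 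A change of variable together with the evenness of $\varphi(\cdot,\ell,\pm1)$ about $0$ reduces the bulk to $\mathcal{I}^D_\mp(r_j)+\mathcal{I}^D_\pm(r_{j+1})$, which by \eqref{eq:int-Dphi'}--\eqref{eq:E-int} equals $\e^{-1}S_G+\mathcal{O}(\e^{-1}\max\{\delta^j,\delta^{j+1}\})$. The cutoff correction is supported on $2\e$-wide strips around the midpoints $h_{j\pm 1/2}$, where $u^{\bm h}$ lies near an extremum of $\varphi^j$ or $\varphi^{j+1}$; a direct computation based on \eqref{eq:phipm-first} and change of variable shows $\int_0^{2\e} D(\varphi)\varphi_y^2\,dy=\mathcal{O}(\e^{-1}\max\{\theta^j,\theta^{j+1}\})$, much smaller than $\e^{-1}\max\{\delta^j,\delta^{j+1}\}$. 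Finally, the $\tilde w$-integral is controlled by the pointwise bound $|\tilde w|\le C\e^{-1}\max\{\delta^j,\delta^{j+1}\}$ of Lemma \ref{lem:est-w} combined with $\int_{I_j}|u^{\bm h}_x|\,dx\le\beta-\alpha$ (monotonicity of $u^{\bm h}$ on $I_j$). Summing the three contributions and invoking Theorem \ref{thm:teta-lead} to convert the $\delta$'s into exponentials yields \eqref{eq:S-diag}.

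For the tridiagonal estimate \eqref{eq:S-tridiag}, Lemma \ref{lem:u^h_j} restricts $u^{\bm h}_j$ on the support $I_{j+1}$ of $\tau^{\bm h}_{j+1}$ to $-(1-\chi^{j+1})w^{j+1}$, and symmetrically $u^{\bm h}_{j+1}$ on the support $I_j$ of $\tau^{\bm h}_j$ to $\chi^j w^{j+1}$. In both pairings only the single function $w^{j+1}$ enters, bounded by $C\e^{-1}\delta^{j+1}$ thanks to Lemma \ref{lem:est-w}; coupling with $\tau^{\bm h}$ and using the uniform bound $\int|u^{\bm h}_x|\,dx\le C$ delivers the desired estimate after a final appeal to Theorem \ref{thm:teta-lead}. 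The main technical subtlety throughout the proof is the analysis of the narrow transition strips $[h_i-\e,h_i+\e]$, where $\chi^i$ mixes two neighbouring profiles; this is precisely what Lemma \ref{lem:varphi^j-diff} is designed to tame, and without it the $\mathcal{O}(\e^{-1}|\theta^j-\theta^{j+1}|)$ mixing error could not be separated from the main $\e^{-1}S_G$.
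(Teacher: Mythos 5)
Your proof is correct and takes essentially the same route as the paper's: the same support argument for \eqref{eq:S-null}, the same $w^{j+1}$-based bound for \eqref{eq:S-tridiag}, and for \eqref{eq:S-diag} a decomposition that is an algebraic reshuffling of the paper's $\nu_1,\nu_2,\nu_3$ splitting (your exact cancellation $u^{\bm h}_j=-u^{\bm h}_x+\tilde w$ on $I_j$ is valid, since the $\chi^j_x(\varphi^j-\varphi^{j+1})$ terms in Lemma \ref{lem:u^h_j} and in $u^{\bm h}_x$ do cancel). The leading term is reduced to \eqref{eq:int-Dphi'} and the error terms are controlled by Lemmas \ref{lem:est-w} and \ref{lem:varphi^j-diff} together with Theorem \ref{thm:teta-lead}, exactly as in the paper.
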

\begin{proof}
In view of \eqref{eq:prop-tau}, $\tau^{\bm h}_j=0$ out of $I_j=[h_{j-1/2},h_{j+1/2}]$;
on the other hand, from Lemma \ref{lem:u^h_j}, $u^{\bm h}_k=0$ out of $[h_{k-1}-\e,h_{k+1}+\e]$.
Hence, if $|k-j|>1$, then $I_j\cap[h_{k-1}-\e,h_{k+1}+\e]=\emptyset$ and  \eqref{eq:S-null} holds true.
Next, by using again \eqref{eq:prop-tau}, we infer
$$\langle u^{\bm h}_j,\tau^{\bm h}_{j+1}\rangle=\int_{I_{j+1}}u^{\bm h}_j\tau^{\bm h}_{j+1}\,dx.$$
From Lemma \ref{lem:u^h_j} and Lemma \ref{lem:est-w}, it follows that
$$|u^{\bm h}_j(x)|\leq C|w^{j+1}(x)|\leq C\e^{-1}\delta^{j+1}, \qquad \qquad \mbox{ for any } x\in I_{j+1}.$$
As a consequence,
$$|\langle u^{\bm h}_j,\tau^{\bm h}_{j+1}\rangle|\leq C\e^{-1}\delta^{j+1}\int_{I_{j+1}}|\tau^{\bm h}_{j+1}|\,dx\leq C\e^{-1}\delta^{j+1}\int_{I_{j+1}}|u^{\bm h}_x|\,dx,$$
where in the last inequality we used the definition of $\tau^{\bm h}_j$.
Notice that $u^{\bm h}_x$ is of one sign in $I_{j+1}$ and so, we have
$$|\langle u^{\bm h}_j,\tau^{\bm h}_{j+1}\rangle|\leq C\e^{-1}\delta^{j+1}.$$
Similarly, one can obtain the bound for $|\langle u^{\bm h}_{j+1},\tau^{\bm h}_j\rangle|$ and using Theorem \ref{thm:teta-lead} we end up with \eqref{eq:S-tridiag}.
It remains to prove \eqref{eq:S-diag}, which is the most interesting case.
In view of \eqref{eq:prop-tau}, we have
$$\langle u^{\bm h}_j,\tau^{\bm h}_{j}\rangle=\int_{I_j}u^{\bm h}_j\tau^{\bm h}_j\,dx.$$
By using the formula in Lemma \ref{lem:varphi_l}, let us rewrite for $x\in I_j$, $u^{\bm h}_j=\nu_1+\nu_2$ with
$$\nu_1:=-(1-\chi^j)\varphi_x^j-\chi^j\varphi_x^{j+1}, \qquad \nu_2:=(1-\chi^j)w^j-\chi^jw^{j+1}+\chi^j_x(\varphi^j-\varphi^{j+1}).$$
Also, let us rewrite for $x\in I_j$, $\tau^{\bm h}_j=D(u^{\bm h})(\nu_1+\nu_3)$ with
$$\nu_3:=-\gamma^ju^{\bm h}_x+(1-\chi^j)\varphi_x^j+\chi^j\varphi_x^{j+1}=(1-\gamma^j)u^{\bm h}_x+\chi^j_x(\varphi^j-\varphi^{j+1}),$$
where in the last passage we differentiate \eqref{u^h(x)}.
Thus,
$$\langle u^{\bm h}_j,\tau^{\bm h}_{j}\rangle=\int_{I_j}D(u^{\bm h})\nu_1^2\,dx+\int_{I_j}D(u^{\bm h})\nu_1(\nu_2+\nu_3)\,dx+\int_{I_j}D(u^{\bm h})\nu_2\nu_3\,dx.$$
For the first integral, taking advantage of the properties of $\chi^j$, we deduce 
\begin{equation*}
	\int_{I_j}D(u^{\bm h})\nu_1^2\,dx=\int_{h_{j-1/2}}^{h_j-\e}D(\varphi^j)(\varphi^j_x)^2\,dx+\int_{h_j+\e}^{h_{j+1/2}}D(\varphi^{j+1})(\varphi^{j+1}_x)^2\,dx+\mathcal{E},
\end{equation*}
where 
\begin{equation*}
	\mathcal{E}:=\int_{h_j-\e}^{h_j+\e}D(u^{\bm h})\nu_1^2\,dx.
\end{equation*}
Since we can write 
$$u^{\bm h}=\varphi^j+\chi^j(\varphi^{j+1}-\varphi^j),\qquad \nu_1=-\varphi_x^j+\chi^j(\varphi_x^j-\varphi_x^{j+1}), \qquad \qquad \mbox{ in } [h_j-\e,h_j],$$
and 
$$u^{\bm h}=\varphi^{j+1}+(1-\chi^j)(\varphi^j-\varphi^{j+1}),\qquad\nu_1=(1-\chi^j)(\varphi_x^{j+1}-\varphi_x^{j})-\varphi_x^{j+1}, \quad  \mbox{ in } [h_j,h_j+\e],$$
we obtain
\begin{equation*}
	\mathcal{E}=\int_{h_j-\e}^{h_j}D(\varphi^j)(\varphi_x^j)^2\,dx+\int_{h_j}^{h_j+\e}D(\varphi^{j+1})(\varphi_x^{j+1})^2\,dx+\mathcal{E}_1,
\end{equation*}
where (thanks to Lemma \ref{lem:varphi^j-diff}) $|\mathcal{E}_1|\leq C\e^{-1}|\theta^j-\theta^{j+1}|\leq C\e^{-1}\max\left\{\theta^j,\theta^{j+1}\right\}$.
Hence, 
\begin{equation*}
	\int_{I_j}D(u^{\bm h})\nu_1^2\,dx=\int_{h_{j-1/2}}^{h_j}D(\varphi^j)(\varphi^j_x)^2\,dx+\int_{h_j}^{h_{j+1/2}}D(\varphi^{j+1})(\varphi^{j+1}_x)^2\,dx+\mathcal{E}_1.
\end{equation*}
By changing variable, we obtain the integral computed in \eqref{eq:int-Dphi'} and so,
\begin{equation}\label{eq:S_G-int1}
	\int_{I_j}D(u^{\bm h})\nu_1^2\,dx=\e^{-1}S_G+E+\mathcal{E}_1,
\end{equation}
where $|E|\leq C\e^{-1}\max\left\{\delta^j,\delta^{j+1}\right\}$.
Moreover, since $\int_{I_j}|\nu_1|\leq C$, from Lemma \ref{lem:est-w} and Lemma \ref{lem:varphi^j-diff} it follows that
$$|\nu_2|+|\nu_3|\leq C\e^{-1}\max\{\delta^j,\delta^{j+1}\},$$
and as a consequence
\begin{equation}\label{eq:S_G-int2}
	\left|\int_{I_j}D(u^{\bm h})\nu_1(\nu_2+\nu_3)\,dx\right|+\left|\int_{I_j}D(u^{\bm h})\nu_2\nu_3\,dx\right|\leq C\e^{-1}\max\{\delta^j,\delta^{j+1}\}.
\end{equation}
Combining \eqref{eq:S_G-int1}, \eqref{eq:S_G-int2} and using Theorem \ref{thm:teta-lead}, we obtain \eqref{eq:S-diag} and the proof is complete. 
\end{proof}
Thanks to Theorem \ref{thm:matrix-S}, we can rewrite the ODEs \eqref{eq:h-3} in the compact form 
\begin{equation}\label{eq:h-4}
	S(\bm h)\bm h'=\mathcal{P}(\bm h),
\end{equation}
with $\mathcal{P}_j(\bm h):=\theta^{j+1}-\theta^j$, and the $n\times n$ tridiagonal matrix $S$ satisfying 
\begin{equation*}
	S(\bm h)=\e^{-1}S_G\mathcal{I}_n+\mathcal{O}\left\{\e^{-1}\exp\left(-\frac{A\ell^{\bm h}}{2\e}\right)\right\},
\end{equation*}
where $\mathcal{I}_n$ is the identity matrix and $\ell^{\bm h}:=\min \ell_j$.
Hence, \eqref{eq:h-4} reads as
\begin{equation}\label{eq:h-5}
	\bm h'=S^{-1}(\bm h)\mathcal{P}(\bm h),
\end{equation}
where
$$S^{-1}(\bm h)=\frac{\e}{S_G}\mathcal{I}_n+\mathcal{O}\left\{\e^{-1}\exp\left(-\frac{A\ell^{\bm h}}{2\e}\right)\right\}.$$
In particular, if we introduce the function $\mathcal{P}^*(\bm h)=\frac{\e}{S_G}\mathcal{I}_n\mathcal{P}(\bm h)$ given by
$$\mathcal{P}_j^*(\bm h):=\frac{\e}{S_G}(\theta^{j+1}-\theta^j),\qquad \qquad j=1,\dots, N,$$
we end up with
$$\|S^{-1}(\bm h)\mathcal{P}(\bm h)-\mathcal{P}^*(\bm h)\|_\infty\leq C\e^{-1}\exp\left(-\frac{A\ell^{\bm h}}{2\e}\right)\|\mathcal{P}(\bm h)\|_\infty,$$
where $\|\mathcal{P}\|_\infty:=\max|\mathcal{P}_j|$.
Therefore, we can rewrite \eqref{eq:h-5} as
$$\bm h'=\mathcal{P}^*(\bm h)+\bar{\mathcal{E}}(\bm h),$$
with $\|\bar{\mathcal{E}}(\bm h)\|_\infty$ much smaller than $\|\mathcal{P}^*(\bm h)\|_\infty$, 
so that the magnitude and direction of $\bm h'$ is determined by $\mathcal{P}^*(\bm h)$.
In conclusion, by neglecting $\bar{\mathcal{E}}(\bm h)$ we derive the system
\begin{equation}\label{eq:h_j-final}
	h_j'=\frac{\e}{S_G}(\theta^{j+1}-\theta^j), \qquad \qquad j=1,\dots, N.
\end{equation}
As we mentioned in the Introduction, equations \eqref{eq:h_j-final} generalize the result in \cite{Carr-Pego},
and their system can be obtained from \eqref{eq:h_j-final} by choosing $D\equiv1$ and $\alpha=-1, \beta=1$ in \eqref{eq:ass-f} and \eqref{eq:ass-int0}-\eqref{eq:ass-int1}. 

\section{Numerical evidence}\label{sec:num}
In this section we explore the metastable dynamics of the problem \eqref{eq:D-model} subject to \eqref{eq:Neu} and \eqref{eq:initial};
the main goal is to provide a numerical confirmation that the ODE system \eqref{eq:h_j-intro} describes accurately the slow layer motion. 
We recall once again that if we choose $D\equiv1$ and $f$ satisfying \eqref{eq:ass-f}-\eqref{eq:ass-int0}-\eqref{eq:ass-int1} with $\alpha=-1$ and $\beta=1$,
we recover the same equation of \cite{Carr-Pego}; hence, in our numerical experiments we will consider a non constant diffusion function $D$, that is 
the \emph{Mullins diffusion} $D(u)$ defined in \eqref{eq:MullinsD} with $D_0=1$.
Moreover, we choose $f(u)=u^3-u$, which satisfies \eqref{eq:ass-f}-\eqref{eq:ass-int0}-\eqref{eq:ass-int1} with $\alpha=-1$ and $\beta=1$.
Notice that, in this case, a metastable state $u^{\bm h}$ defined as in \eqref{u^h(x)} is equal to zero at the transition points $h_j$.
The explicit formulas for the even function $D$ and the odd function $f$ imply that the function $G$ defined in \eqref{eq:G} is an even function given by
$$G(u)=\frac{u^2-1}{2}-\ln(1+u^2)+\ln2,$$
and, as a consequence, the set of parameters appearing in \eqref{eq:h_j-intro} is given by
\begin{equation}\label{eq:ode-par}
	\begin{aligned}
		S_G:=\int_{-1}^1&\sqrt{u^2-2\ln(1+u^2)+2\ln2-1}\,du, \qquad A:=\frac{\sqrt{G''(\pm1)}}{D(\pm1)}=\frac12,\\
		&\qquad \quad K:=2\exp\left\{\int_0^1\left(\frac{D(t)}{2\sqrt{2G(t)}}-\frac{1}{1-t}\right)\,dt\right\}.
	\end{aligned}
\end{equation}
Our goal is to compute the numerical solution of the PDE \eqref{eq:D-model} and to compare the layer motion with the numerical solution of the ODE
\begin{equation}\label{eq:h_j-approx}
	h_j'=\frac{\e K^2}{32S_G}\left[\exp\left(-\frac{h_{j+1}-h_j}{2\e}\right)-\exp\left(-\frac{h_j-h_{j-1}}{2\e}\right)\right], \qquad \qquad j=1,\dots, N,
\end{equation}
where $S_G$ and $K$ defined in \eqref{eq:ode-par}.
The right hand side of \eqref{eq:h_j-approx} is obtained by using the explicit values of the constants \eqref{eq:ode-par} and by neglecting the smaller reminder in the expansion
for $\theta^j$ \eqref{eq:theta^j}; as it was already mentioned in the Introduction, since the functions $D,f$ are symmetric, \eqref{eq:h_j-approx} implies that the two closest layers
move towards each other with approximately the same speed and the other $N-2$ points are essentially static.

\begin{figure}[t]
\centering
\includegraphics[width=0.65\textwidth]{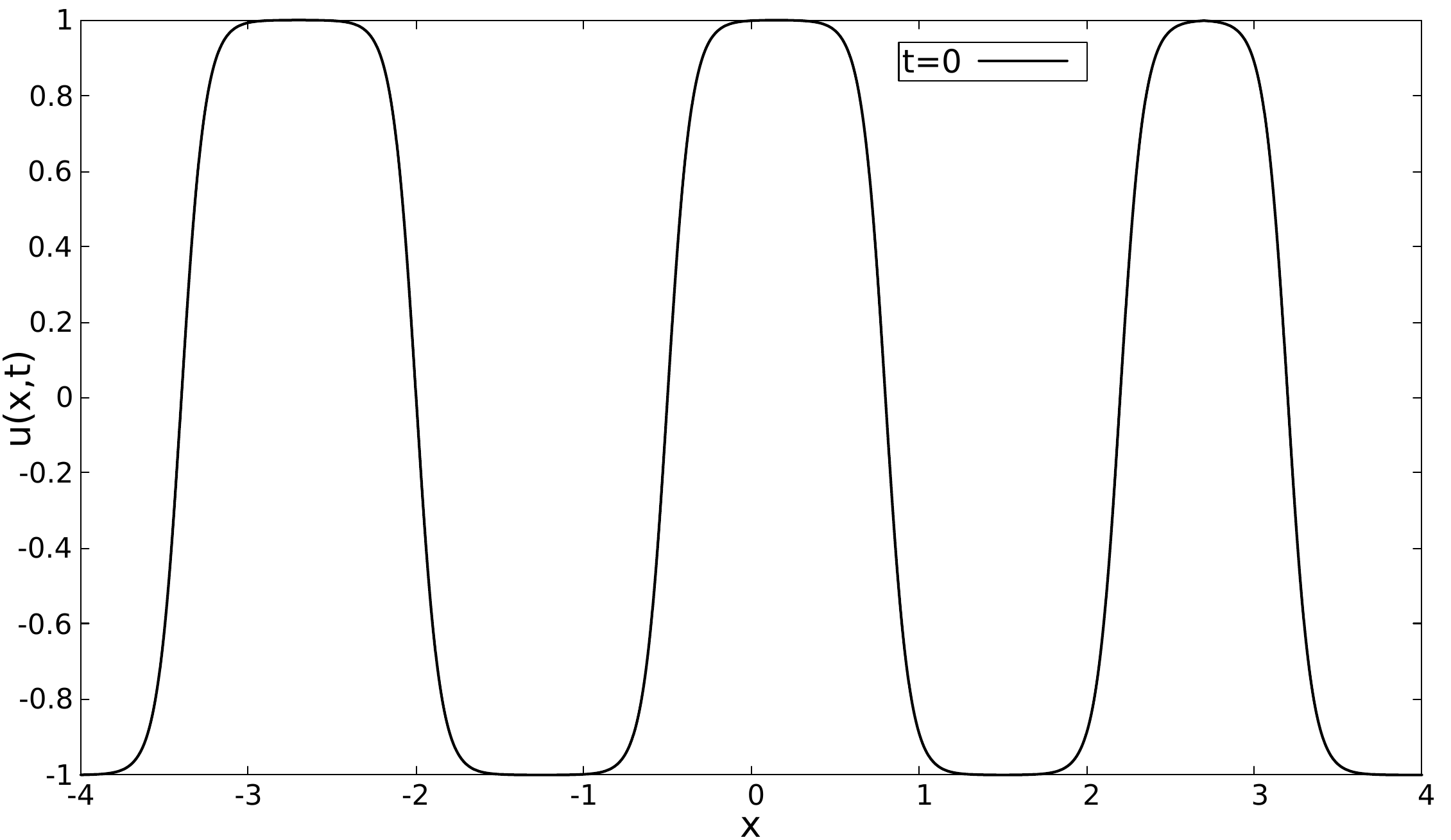} 
\caption{Initial datum $u_0$ with $N=6$ located at -3.4, -2, -0.5, 0.8, 2.2, 3.2. 
\label{fig:pde:icond}}
\end{figure}
\begin{figure}[t]
\centering
\includegraphics[scale = 0.5]{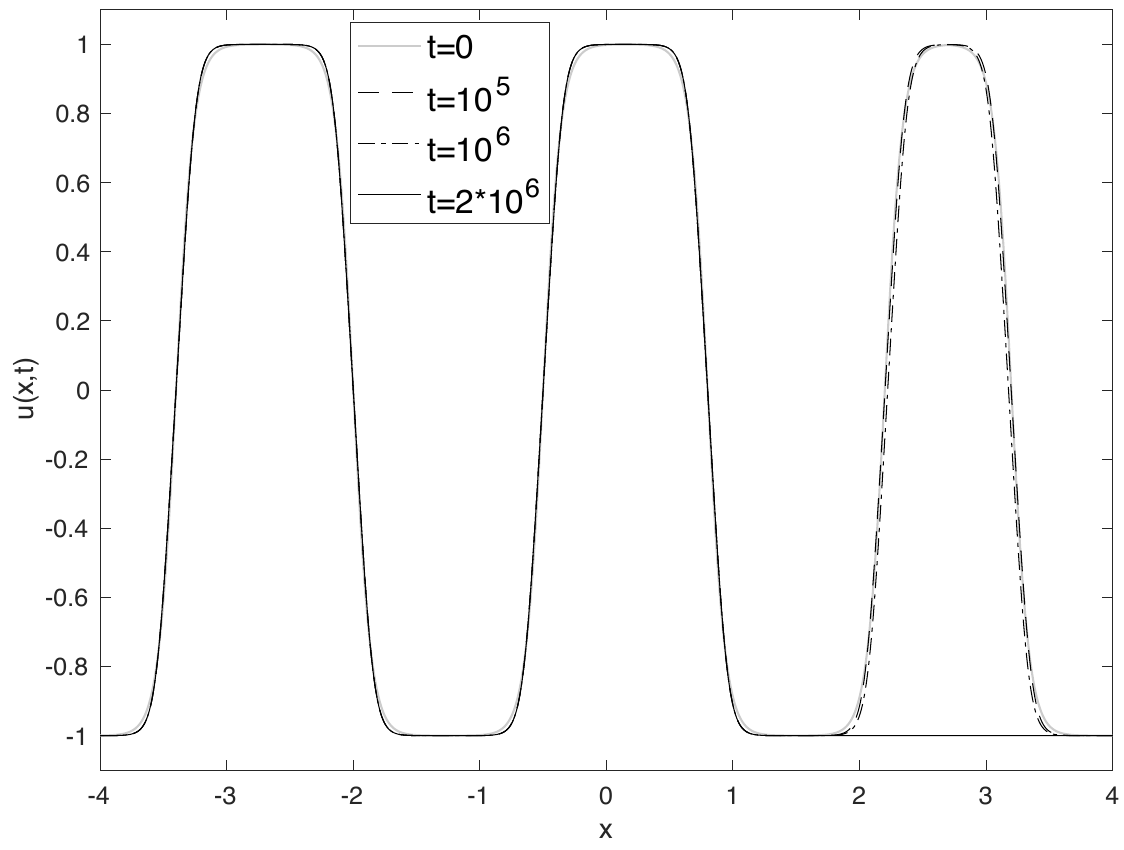}
\caption{Numerical solutions to \eqref{eq:D-model}-\eqref{eq:Neu}-\eqref{eq:initial} with $D(u)=(1+u^2)^{-1}$, $f(u)=u^3-u$ and $\e=0.1$.
The initial datum $u_0$ with 6 transitions is depicted in Figure \ref{fig:pde:icond}.
This Figure is taken from \cite[Figure 3]{FHLP}.}
\label{fig:Mullins}
\end{figure}

In the following experiments we set different values for $\e$ and different initial conditions $u_0$ with a $N$-transition layer structure in the spatial interval $[-4,4]$
as given by \cite[Eq. (3.5)]{FHLP}, with $\Phi_\e(x):=\tanh(x/\sqrt2\e)$,
which corresponds to $\Phi$ in Lemma \ref{lem:stand} for the linear case $D\equiv 1$, see Figure \ref{fig:pde:icond}.
We then solve numerically the ODE system \eqref{eq:h_j-approx} using the Matlab command \texttt{ode15s} which is a time-step adaptive scheme (see \cite{ShampineReiche}). 
Moreover, to verify this approximate solution, we also solve numerically the PDE problem \eqref{eq:D-model}
using continuous linear Lagrange Finite Elements (FEs) in space (see \cite{Ern.Guermond:04}) combined with the backward Euler method, 
also known as Backward Difference (BDF) method; for the time discretization (see \cite{Thomee}). 
The formal convergence analysis of this  FE-BDF numerical method for solving non-linear PDEs has been well documented in \cite{Wheeler}  and \cite[Ch. 13]{Thomee}. 
We use $N_h=4000$ finite elements, and set a constant time step $\Delta t=0.5$ for all numerical experiments.
The FE-BDF numerical scheme has been implemented in the  \texttt{C++} programming language along with  the linear algebra \texttt{Eigen} open-source library (see \url{http://eigen.tuxfamily.org}), and the linear direct solver \texttt{Intel MKL Pardiso} \cite{Schenk.Gartner.ea:01}.

\subsection*{Numerical experiment with $\pmb{\e=0.1}$}
We set $\e=0.1$ and an initial datum $u_0$ with a $6$-transition layer structure;
the Figure \ref{fig:pde:icond} illustrates this initial condition.
As has been shown in \cite{FHLP}, the evolution of the initial condition depicted in Figure \ref{fig:pde:icond} is very slow and 
the solution has the same transition layer structure as the initial datum at time $t=10^6$; 
in fact, it looks like the layers, that is the points where the solution is equal to zero, are static.
However, at time $t=2\times10^6$ the two closest layers have already collapsed and the solution has a $4$-transition layer structure, see Figure \ref{fig:Mullins}.

Now, we want compare the layers motion of the numerical solution with the one governed by \eqref{eq:h_j-approx}.
To do so, we compute the locations of the zeros of the numerical solution at different times and compare it with \eqref{eq:h_j-approx}.
The Table \ref{tbl:ntest1} shows the computed values for $d_j(t):=h_j(t)-h_j(0)$ for any $j\in\{1,\dots,6\}$ at the times $t\in\{1.0\times10^{4}, 3.0\times10^{5}, 6.0\times10^{5}, 9.0\times10^{5}, 1.2\times10^{6}\}$ using the numerical solutions for the ODE (obtained with \texttt{ode15s}), and the PDE (obtained with the FE-BDF numerical scheme).
We can observe an acceptable agreement for all values of $d_j(t)$ between both numerical solutions.
\begin{table}
\centering
\begin{adjustbox}{max width=1.0\textwidth,center}
  \begin{tabular}{cccccc}
    \toprule
    \midrule
        &
    $t=1\times10^{4}$ &
    $t=3\times10^{5}$ &
    $t=6\times10^{5}$ &
    $t=9\times10^{5}$ &
    $t=1.2\times10^{6}$ \\
    \toprule
    \midrule
    \multicolumn{1}{c}{}&
    \multicolumn{5}{c}{Solving the ODE \eqref{eq:h_j-approx}}\\
    \midrule
$d_1$ & $-2.6977 \times 10^{-6}$ & $-8.1064\times10^{-5}$ & $-1.6239\times10^{-4}$ &$-2.4400\times10^{ -4}$ & $-3.2588\times10^{-4}$ \\
$d_2$ &$-4.3518\times10^{-8}$ &$-1.3044\times10^{-6}$ &$-2.6064\times10^{-6}$&$-3.9061\times{10}^{-6}$  &  $-5.2034\times10^{-6}$\\
$d_3$ & $\phantom{-}3.6508\times10^{-7}$ & $\phantom{-}1.0954\times10^{-5}$ & $\phantom{-}2.1914\times10^{-5}$ & $\phantom{-}3.2878 \times10^{-5}$ & $\phantom{-}4.3847\times10^{-5}$ \\
$d_4$ &$-3.2164\times10^{-7}$ &$-9.7195\times10^{-6}$ &$-1.9594\times10^{-5}$ &$-2.9645 \times10^{-5}$ &$-3.9902\times10^{-5}$\\
$d_5$ & $\phantom{-}1.5043\times10^{-4}$ & $\phantom{-}4.9610\times10^{-3}$ & $\phantom{-}1.1157\times10^{-2}$ & $\phantom{-}1.9414\times10^{-2}$ & $\phantom{-}3.1830\times10^{-2}$\\
$d_6$ & $-1.5048\times10^{-4}$ & $-4.9624\times10^{-3}$ & $-1.1159\times10^{-2}$ & $-1.9418\times10^{-2}$ & $-3.1835\times10^{-2}$\\
 \midrule
    \multicolumn{1}{c}{}&
    \multicolumn{5}{c}{Solving the PDE \eqref{eq:D-model}}\\
    \midrule
$d_1$ &    $-2.6958\times10^{-6}$ &    $-8.0980\times10^{-5}$ &  $-1.6223\times10^{-4}$ &  $-2.4375\times10^{-4}$  &  $-3.2554\times10^{-4}$\\
$d_2$ & $-4.3499\times10^{-8}$ & $-1.3027\times10^{-6}$ & $-2.6031\times10^{-6}$ & $-3.9011\times10^{-6}$ & $-5.1967\times10^{-6}$\\
$d_3$ & $\phantom{-}3.6486\times10^{-7}$ & $\phantom{-}1.0942\times10^{-5}$ & $\phantom{-}2.1889\times10^{-5}$ & $\phantom{-}3.2840\times10^{-5}$ &  $\phantom{-}4.3796\times10^{-5}$\\
$d_4$ & $-3.2144\times10^{-7}$ & $-9.7083\times10^{-6}$ & $-1.9572\times10^{-5}$ & $-2.9610\times10^{-5}$ &   $-3.9855\times10^{-5}$\\
$d_5$ & $\phantom{-}1.5032\times10^{-4}$ & $\phantom{-}4.9566\times10^{-3}$ & $\phantom{-}1.1146\times10^{-2}$ &   $\phantom{-}1.9392\times10^{-2}$ & $\phantom{-}3.1781\times10^{-2}$\\
$d_6$ & $-1.5037\times10^{-4}$ & $-4.9581\times10^{-3}$ &  $-1.1149\times10^{-2}$ &  $-1.9396\times10^{-2}$ &  $-3.1786\times10^{-2}$\\
 \bottomrule
 \end{tabular}
\end{adjustbox} 
\vskip0.1cm
 \caption{Evolution of the distance traveled by any layer at different times by solving numerically the ODE system \eqref{eq:h_j-approx} and the PDE \eqref{eq:D-model}
with $D(u)=(1+u^2)^{-1}, f(u)=u^3-u$, and $\e=0.1$. The initial condition $u_0$ has $6$ transitions at $x\in\{-3.4,-2,-0.5,0.8,2.2,3.2\}$, see Figure \ref{fig:pde:icond}.
\label{tbl:ntest1}}
\end{table}

Finally, in Table \ref{tbl:dtest1} we compare the numerical time obtained by solving the ODE \eqref{eq:h_j-approx} and the PDE \eqref{eq:D-model},
denoted by $t(ODE)$ and $t(PDE)$, respectively, necessary for the distance between the two closest layers to be given by the values in the first column.

\begin{table}
\centering
\begin{adjustbox}{max width=1.0\textwidth,center}
  \begin{tabular}{ccc}
    \toprule
    \midrule
    $h_6-h_5$ &
    $t(ODE)$ &
    $t(PDE)$ \\
    \toprule
    \midrule
$0.99$ & $3.01839\times 10^{5}$ & $3.02206\times10^{5}$ \\
$0.95$ & $1.05257\times10^{6}$ &$1.05425\times10^{6}$ \\
$0.75$ & $1.65393\times10^{6}$ & $1.65624\times10^{6}$ \\
$0.5$ &$1.66507\times10^{6}$ & $1.66739\times10^{6}$\\
$0.25$ & $1.66515\times10^6$ & $1.66747\times10^6$\\
 \bottomrule
 \end{tabular}
\end{adjustbox} 
\vskip0.1cm
 \caption{Time necessary for the distance $h_6-h_5$ to be given by the values in the first column.
 The initial condition $u_0$ has $6$ transitions at $x\in\{-3.4,-2,-0.5,0.8,2.2,3.2\}$, see Figure \ref{fig:pde:icond}.}
\label{tbl:dtest1}
\end{table}

\subsection*{Numerical experiment with $\pmb{\varepsilon=0.07}$}
Here we set $\e=0.07$ and an initial datum $u_0$ with a 6-transition layer structure as in the first experiment, but we change the positions of the transitions,
which are now located at $x\in\{-3.4,-2,-0.5,0.8,2.2,2.9\}$.
However, given the smaller value of $\e$, it takes much longer time for $d_j(t)$ to be large enough (at least greater than $10^{-10}$);
therefore, we only track the evolution of the \emph{fastest} $h_j(t)$,
which in this particular case are $h_5(t)$ and $h_6(t)$ (since those transitions are the closest ones).
Hence, the Table \ref{tbl:ntest2} shows the computed values of both $d_5(t)$ and $d_6(t)$
at different times $t\in\{1\times10^{4}, 2\times10^{4}, 4\times10^{4}, 6\times10^{4}, 1.0\times10^{5}\}$ 
using the numerical solutions for the  ODE (obtained with \texttt{ode15s}), and the PDE (obtained with the FE-BDF numerical scheme).
Again, we can observe an acceptable agreement between both numerical solutions.

\begin{table}
  \centering
\begin{adjustbox}{max width=1.0\textwidth,center}
  \begin{tabular}{cccccc}
    \toprule
    \midrule
          &
    $t=1\times10^{4}$ &
    $t=2\times10^{4}$ &
    $t=4\times10^{4}$ &
    $t=6\times10^{4}$ &
    $t=1.0\times10^{5}$ \\
    \toprule
    \midrule
    \multicolumn{1}{c}{}&
    \multicolumn{5}{c}{Solving the ODE \eqref{eq:h_j-approx}}\\
    \midrule

      $d_5$ & $\phantom{-}1.0534\times10^{-4}$ & $\phantom{-}2.1132\times10^{-4}$ & $\phantom{-}4.2522\times10^{-4}$ & $\phantom{-}6.4177\times10^{-4}$ & $\phantom{-}1.0831\times10^{-3}$ \\
      $d_6$ & $-1.0534\times10^{-4}$ & $-2.1132\times10^{-4}$ & $-4.2522\times10^{-4}$ & $-6.4177\times10^{-4}$ & $-1.0831\times10^{-3}$ \\
  
 \midrule
    \multicolumn{1}{c}{}&
    \multicolumn{5}{c}{Solving the PDE \eqref{eq:D-model}}\\
    \midrule
$d_5$& $\phantom{-}1.0517\times10^{-4}$ &     $\phantom{-}2.1098\times10^{-4}$ &  $\phantom{-}4.2453\times10^{-4}$ & $\phantom{-}6.4071\times10^{-4}$ & $\phantom{-}1.0813\times10^{-3}$\\
$d_6$& $-1.0517\times10^{-4}$ &    $-2.1098\times10^{-4}$ & $-4.2453\times10^{-4}$ & $-6.4071\times10^{-4}$&  $-1.0813\times10^{-3}$\\
 
 \bottomrule
 \end{tabular}
\end{adjustbox} 
\vskip0.1cm
 \caption{Evolution of $d_5(t)$ and $d_6(t)$ at different times by solving numerically the ODE system \eqref{eq:h_j-approx} and the PDE \eqref{eq:D-model}
with $D(u)=(1+u^2)^{-1}, f(u)=u^3-u$, and $\e=0.07$. The initial condition $u_0$ has $6$ transitions at $x\in\{-3.4,-2,-0.5,0.8,2.2,2.9\}$.\label{tbl:ntest2}}
\end{table}

\subsection*{Numerical experiment with $\pmb{\varepsilon=0.05}$}
Finally, in this experiment we set $\e=0.05$ and an initial datum $u_0$ with a 4-transition layer structure with layers located at $x\in\{-2, -1.3, -0.6, 0\}$.
As in the previous experiment, given the smaller value of $\e$, it takes much longer time to all $h_j(t)$ to change position, 
therefore we only track the evolution of the \emph{fastest} $h_j(t)$, which are $h_3(t)$ and $h_4(t)$ in this case.
Then, the Table \ref{tbl:ntest3} shows the computed values of both $h_3(t)$ and $h_4(t)$ 
at different times $t\in\{1.0\times10^{4}, 2.0\times10^{4}, 4.0\times10^{4}, 6.0\times10^{4}, 1.0\times10^{5}\}$ using the numerical solutions for the  ODE (obtained with \texttt{ode15s}), and the PDE (obtained with the FE-BDF numerical scheme).
We again can observe an acceptable agreement between both numerical solutions.

\begin{table}
  \centering
\begin{adjustbox}{max width=1.0\textwidth,center}
  \begin{tabular}{cccccc}
    \toprule
    \midrule
          &
    $t=1\times10^{4}$ &
    $t=2\times10^{4}$ &
    $t=4\times10^{4}$ &
    $t=6\times10^{4}$ &
    $t=1.0\times10^{5}$ \\
    \toprule
    \midrule
    \multicolumn{1}{c}{}&
    \multicolumn{5}{c}{Solving the ODE \eqref{eq:h_j-approx}}\\
    \midrule
      $d_3$ & $\phantom{-}1.3489\times10^{-6}$ & $\phantom{-}2.6979\times10^{-6}$ & $\phantom{-}5.3965\times10^{-6}$ & $\phantom{-}8.0956\times10^{-6}$ & $\phantom{-}1.3496\times10^{-5}$ \\
      $d_4$ & $-1.3741\times10^{-6}$       & $-2.7483\times10^{-6}$       & $-5.4971\times10^{-6}$       & $-8.2466\times10^{-6}$       & $-1.3747\times10^{-5}$ \\
 
 \midrule
    \multicolumn{1}{c}{}&
    \multicolumn{5}{c}{Solving the PDE \eqref{eq:D-model}}\\
    \midrule
$d_3$ & $\phantom{-}1.3437\times10^{-6}$  & $\phantom{-}2.6871\times10^{-6}$ & $\phantom{-}5.3743\times10^{-6}$ & $\phantom{-}8.0621\times10^{-6}$ & $\phantom{-}1.3440\times10^{-5}$\\
$d_4$ & $-1.3687\times10^{-6}$& $-2.7371\times10^{-6}$ & $-5.4744\times10^{-6}$ &  $-8.2123\times10^{-6}$ &   $-1.3690\times10^{-5}$\\
 \bottomrule
 \end{tabular}
\end{adjustbox} 
\vskip0.1cm
 \caption{Evolution of $d_3(t)$ and $d_4(t)$ at different times by solving numerically the ODE system \eqref{eq:h_j-intro} and the PDE \eqref{eq:D-model}
with $D(u)=(1+u^2)^{-1}, f(u)=u^3-u$, and $\e=0.05$. The initial condition $u_0$ has $4$ transitions at $x\in\{-2, -1.3, -0.6, 0\}$.\label{tbl:ntest3}}
\end{table}

All the numerical experiments presented show that the ODE system \eqref{eq:h_j-approx} 
accurately describes the metastable dynamics of the solutions to \eqref{eq:D-model}-\eqref{eq:Neu}.  
      
\section*{Acknowledgements}
The authors thank an anonymous referee for useful observations that improved the paper.
The work of R. Folino was partially supported by DGAPA-UNAM, program PAPIIT, grant IN-103425. L.F. L\'opez R\'ios was partially supported by DGAPA-UNAM, program PAPIIT, grant IN113225, and SECIHTI grant CF-2023-G-122. J. A. Butanda Mej\'ia was supported by SECIHTI ``Estancias Posdoctorales por M\'exico 2022''.

\end{document}